\newtheorem{Definition}{Definition}[subsection]
\newtheorem{Theorem}[Definition]{Theorem}
\newtheorem{Lemma}[Definition]{Lemma}
\newtheorem{Corollary}[Definition]{Corollary}
\newtheorem{Example}[Definition]{Example}
\newtheorem{Remark}[Definition]{Remark}
\newtheorem{Proposition}[Definition]{Proposition}
\newcommand{\Ass}{{\mathop{\mathrm{Ass}}}}
\newcommand{\hull}{{\mathop{\mathrm{hull}}}}
\newcommand{\lc}{{\mathop{\mathrm{lc}}}}
\begin{document}
\title{Modular Techniques for Effective Localization and Double Ideal Quotient}
\author{Yuki Ishihara \thanks{Graduate School of Science, Rikkyo University, 3-34-1 Nishi-Ikebukuro, Toshima-ku, Tokyo, Japan, 171-8501, yishihara@rikkyo.ac.jp}}
\date{}
\maketitle
\begin{abstract}
By double ideal quotient, we mean $(I:(I:J))$ where $I$ and $J$ are ideals. In our previous work [11], double ideal quotient and its variants are shown to be very useful for checking prime divisors and generating primary components. Combining those properties, we can compute "direct localization" effectively, comparing with full primary decomposition. In this paper, we apply modular techniques effectively to computation of such double ideal quotient and its variants, where first we compute them modulo several prime numbers and then lift them up over rational numbers by Chinese Remainder Theorem and rational reconstruction. As a new modular technique for double ideal quotient and its variants, we devise criteria for output from modular computations. Also, we apply modular techniques to intermediate primary decomposition. We examine the effectiveness of our modular techniques for several examples by preliminary computational experiments in Singular. 
\end{abstract}

\section{New Contributions}
For proper ideals $I$ and $J$, {\em double ideal quotient} is an ideal of shape $(I:(I:J))$. It and its variants are effective for localization and give us criteria for prime divisors (primary components) and ways to generate primary components. In  \cite{Ishi-Yoko},  "Local Primary Algorithm" computes the specific primary component from given a prime ideal without full primary decomposition. However, they tend to be very time-consuming for computing Gr\" obner bases and ideal quotients in some cases. Also, there is another problem with a way to find candidates of prime divisors. As a solution of these problems, we propose a new method for computing double ideal quotient in the $n$ variables polynomial ring with rational coefficients $\mathbb{Q}[X]$ by using "Modular Techniques", where $X=\{x_1,\ldots,x_n\}$. It is well-known that modular techniques are useful to avoid intermediate coefficient growth and have a good relationship with parallel computing (see \cite{Afzal, Bohm, Idrees, Noro2009}). In this paper, we have the following contributions. 
\begin{enumerate}
	\item Apply modular techniques to double ideal quotient. (Theorem \ref{theorem:diq} and Theorem \ref{assTest})
	\item Extend criteria about prime divisor in \cite{Ishi-Yoko}.  (Theorem \ref{cri:rad})
	\item Devise a new method for certain intermediate decomposition in some special cases. (Corollary \ref{interpri}, Proposition \ref{interrad})
\end{enumerate} 

For a prime number $p$, let $\mathbb{Z}_{(p)}=\{a/b\in \mathbb{Q}\mid p \nmid b\}$ be the localized ring by $p$ and $\mathbb{F}_p[X]$ the polynomial ring over the finite field. We denote by $\phi_p$  the canonical projection $\mathbb{Z}_{(p)}[X]\to \mathbb{F}_p[X]$. Given ideals $I$ and $J$ in the polynomial ring with rational coefficients $\mathbb{Q}[X]$, we first compute double ideal quotient of the image $\phi_p((I:(I:J))\cap \mathbb{Z}_{(p)}[X])$ in  $\mathbb{F}_p[X]$ for  "lucky" primes $p$ (we will discuss such luckiness later). Next, we lift them up to $G_{can}$, a candidate of Gr\"obner basis, from the computed Gr\"obner basis $\bar{G}$ of $\phi_p((I:(I:J))\cap \mathbb{Z}_{(p)}[X])$ by using Chinese Remainder Theorem (CRT) and rational reconstruction (see \cite{Bohm}). Avoiding intermediate coefficient growth, this method is effective for several examples. 

Also, we extend the criterion in \cite{Ishi-Yoko} about prime divisor in order to compute certain "intermediate decomposition" of ideals and to find prime divisors in some special cases. For an ideal $I$ and a prime ideal $P$, it follows that $P$ is a prime divisor of $I$ if and only if $P\supset (I:(I:P))$ (see Theorem 31 (Criterion 5), \cite{Ishi-Yoko}). However, the projected image of a prime ideal may not be a prime ideal but an intersection of prime ideals in $\mathbb{F}_p[X]$. Thus, we generalize the criterion to a radical ideal $J\supset I$; it follows that every prime divisor $P$ of $J$ is associated with $I$ if and only if $J\supset (I:(I:J))$. For such a radical ideal $J$, if $J$ is unmixed, we can compute the intersection  of primary components $Q$ of $I$ whose associated prime is a prime divisor of $J$ by modular techniques. This ideal may be considered as an "intermediate component" of $I$.  By gathering these intermediate components, we may obtain an "intermediate primary decomposition" (see Definition \ref{def:ipd}).  For this computation, we can utilize maximal independent sets (see Section \ref{sec:ipd} for the definition of maximal independent set). 

Primary decomposition of an ideal in a polynomial ring over a field is an essential tool of Commutative Algebra and Algebraic Geometry. Algorithms of primary decomposition have been much studied, for example, by \cite{GIANNI1988149, SHIMOYAMA1996247, KAWAZOE20111158, Eisenbud1992}. We apply double ideal quotient to check whether candidates of prime divisors from modular techniques are associated with the original ideal or not. It shall contribute the total efficiency of the whole process since our "intermediate decomposition" may divide a big task into small ones as a "divide-and-conquer" strategy. 

This paper is organized as follows. In section 2.1, we introduce extended criteria for prime divisor and primary component based on double ideal quotient and its variants. In section 2.2, we apply modular techniques to double ideal quotient and its variants. In section 2.3, we sketch an outline of intermediate primary decomposition. In section 3, we see some effectiveness of modular method in several examples in a preliminary experiment. Its practicality will be examined by computing more detailed experiments. 

\section{Main Theorems}
Here we show theoretical bases for our new techniques described in Section 1. We denote an arbitrary field by $K$ and the ideal  generated by $f_1,\ldots,f_s\in K[X]$ by $(f_1,\ldots,f_s)_{K[X]}$. If the base ring is obvious, we simply write $(f_1,\ldots,f_s)$. Also, we denote by $K[X]_P$ the localized ring by a prime ideal $P$ and by $I_P$ the ideal $IK[X]_P$ respectively.  For an irredundant primary decomposition $\mathcal{Q}$ of $I$, we say that $P$ is a prime divisor of $I$ if there is a primary component $Q\in \mathcal{Q}$ s.t. $P=\sqrt{Q}$. For simplicity, we assume primary decomposition is irredundant. We denote the set of prime divisors of $I$ by $\Ass (I)$ (see Definition 4.1.1 and Theorem 4.1.5 in \cite{greuel2002singular}). For a prime ideal $P$, $P_P$ is also prime. 

\subsection{Criteria for prime divisors and primary components}

First, we recall criteria using double ideal quotient and its variants (see \cite{Ishi-Yoko}, Sect. 3.). In Proposition \ref{cri1}, the equivalence between $(A)$ and $(B)$ is originally described in \cite{vasconcelos2004computational}. In our previous work \cite{Ishi-Yoko}, we relate it with a variant of double ideal quotient $(I:(I:P^{\infty}))$. Double ideal quotient is also used to compute equidimensional hull in \cite{Eisenbud1992}, which we use in Lemma \ref{hullpm_gen} and Theorem \ref{thm-p} later. We will show that such double ideal quotient(s) can be computed efficiently by modular techniques in Section 2.2.

\begin{Proposition}[\cite{Ishi-Yoko},  Theorem 31] \label{cri1}
	Let $I$ be an ideal and $P$ a prime ideal. Then, the following conditions are equivalent. 
			
			$(A)$ $P\in \Ass (I)$, 
			
			$(B)$ $P\supset (I:(I:P))$, 
			
			$(C)$ $P\supset (I:(I:P^{\infty}))$.
\end{Proposition}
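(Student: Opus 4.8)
The plan is to prove the chain of equivalences $(A)\Leftrightarrow(B)\Leftrightarrow(C)$ by working locally at $P$, since associated primes, ideal quotients, and colon ideals all behave well under localization. The key observation is that $P\in\Ass(I)$ if and only if $\operatorname{depth}$-type information localizes: more concretely, $P\in\Ass(I)$ iff $P K[X]_P\in\Ass(I_P)$, i.e. iff $I_P$ has an embedded or isolated component at the maximal ideal of $K[X]_P$. I would reduce each containment condition to its localized counterpart: $P\supset(I:(I:P))$ holds iff, after localizing at $P$, the unit ideal condition fails appropriately, and similarly for $P^\infty$.

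First I would handle $(A)\Rightarrow(C)$. Assuming $P\in\Ass(I)$, write $I = Q\cap I'$ where $Q$ is a $P$-primary component and $I'$ is the intersection of the remaining components (so $P$ does not contain the radical of $I'$). Then $(I:P^\infty)$ picks out the "saturation away from $P$" — one computes $(I:P^\infty) = (Q:P^\infty)\cap(I':P^\infty) = K[X]\cap I' = I'$ when $Q$ is $P$-primary (since $P^\infty$ kills $Q$) — and then $(I:(I:P^\infty)) = (I:I')$, which is contained in $P$ because $I'\not\subseteq P$ forces the colon to land inside the primary component structure at $P$. I would make this precise by localizing at $P$: $(I:I')_P = (I_P : I'_P) = (Q_P : K[X]_P) = Q_P \subseteq PK[X]_P$, and then contract back. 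Next, $(C)\Rightarrow(B)$ is immediate from the inclusion $(I:P^\infty)\subseteq(I:P)$, which gives $(I:(I:P))\subseteq(I:(I:P^\infty))\subseteq P$.

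The remaining and most delicate implication is $(B)\Rightarrow(A)$: assuming $P\supset(I:(I:P))$, show $P\in\Ass(I)$. I would argue by contraposition or directly by localization. Localize at $P$ and set $R=K[X]_P$, $\mathfrak{m}=PR$; the hypothesis gives $(I_R:(I_R:\mathfrak{m}))\subseteq\mathfrak{m}$, hence this double quotient is not all of $R$, which forces $(I_R:\mathfrak{m})\neq I_R$ — equivalently there exists $x\notin I_R$ with $\mathfrak{m}x\subseteq I_R$, i.e. $\mathfrak{m}$ is an associated prime of $I_R$ in $R$, which is exactly $P\in\Ass(I)$. Checking $(I_R:\mathfrak{m})\neq I_R$: if instead $(I_R:\mathfrak{m})=I_R$ then $(I_R:(I_R:\mathfrak{m}))=(I_R:I_R)=R\not\subseteq\mathfrak{m}$, contradicting the localized hypothesis. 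The one subtlety is justifying that the double quotient commutes with localization at $P$ — this is standard since $K[X]$ is Noetherian and $(I:J)_P = (I_P:J_P)$ for finitely generated $J$, applied twice — and that "$P\supset(I:(I:P))$" is preserved under localization, which follows because localization at $P$ is exact and $P$ is the contraction of $\mathfrak m$.

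I expect the main obstacle to be the bookkeeping in $(A)\Rightarrow(C)$ around the saturation $(I:P^\infty)$: one must verify cleanly that all primary components whose radical is contained in $P$ get absorbed (there can be embedded primes below $P$ to worry about), so that $(I:P^\infty)_P = I'_P$ picks out precisely the intersection of components not supported at $P$, and that the resulting colon $(I:I')$ contracts from $Q_P$. Handling the possibility of embedded primary components contained in $P$ is where one must be careful; passing to the localization $R=K[X]_P$ at the very start — where $P$ becomes maximal and the relevant part of the primary decomposition is $\mathfrak m$-primary plus components disjoint from $\mathfrak m$ — is the cleanest way to sidestep this, reducing everything to the elementary statement that $\mathfrak m\in\Ass(I_R)$ iff $(I_R:\mathfrak m)\supsetneq I_R$.
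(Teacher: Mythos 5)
Your implications $(A)\Rightarrow(C)$ and $(B)\Rightarrow(A)$ are essentially sound (the localization argument for $(B)\Rightarrow(A)$ is the standard one and is correct), but the step $(C)\Rightarrow(B)$ is wrong as written, and it breaks your cycle. The inclusion you invoke is backwards: one always has $(I:P)\subseteq(I:P^{\infty})$, not the reverse, so order-reversal in the second argument of the colon gives $(I:(I:P^{\infty}))\subseteq(I:(I:P))$; the containment that comes for free is therefore the implication $(B)\Rightarrow(C)$, not $(C)\Rightarrow(B)$. Your claimed chain $(I:(I:P))\subseteq(I:(I:P^{\infty}))$ is false in general: for $I=(x^{2})$ and $P=(x)$ one has $(I:(I:P))=(x)$ while $(I:P^{\infty})=(1)$ and $(I:(I:P^{\infty}))=(x^{2})$. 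With what you actually establish --- $(A)\Rightarrow(C)$, $(B)\Rightarrow(A)$, and the genuinely trivial $(B)\Rightarrow(C)$ --- condition $(C)$ is never shown to imply anything, so the three-way equivalence is not proved. The repair is within your own toolkit: prove $(C)\Rightarrow(A)$ by the same localization you use for $(B)\Rightarrow(A)$: writing $R=K[X]_P$, $\mathfrak{m}=PR$, condition $(C)$ localizes to $(I_R:(I_R:\mathfrak{m}^{\infty}))\subseteq\mathfrak{m}$, which forces $(I_R:\mathfrak{m}^{\infty})\neq I_R$; taking $x\in(I_R:\mathfrak{m}^{\infty})\setminus I_R$ and a minimal $k\geq 1$ with $x\mathfrak{m}^{k}\subseteq I_R$, any $y\in x\mathfrak{m}^{k-1}\setminus I_R$ satisfies $(I_R:y)=\mathfrak{m}$, so $\mathfrak{m}\in\Ass(I_R)$ and $P\in\Ass(I)$. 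Pair this with $(A)\Rightarrow(B)$, which is immediate after localizing (a unit $u$ with $u(I_R:\mathfrak{m})\subseteq I_R$ would force $(I_R:\mathfrak{m})=I_R$, contradicting $\mathfrak{m}\in\Ass(I_R)$; then contract back). The cycle $(A)\Rightarrow(B)\Rightarrow(C)\Rightarrow(A)$ is exactly the route the paper takes for its generalization, Theorem \ref{cri:rad}; the Proposition itself is quoted from \cite{Ishi-Yoko} without proof there.

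Two smaller points. In $(A)\Rightarrow(C)$, the identity $(I:P^{\infty})=I'$ you start from fails whenever $I$ has an embedded prime strictly containing $P$ (the components to worry about lie above $P$, not ``below'' it as you say): for $I=(x^{2},xy)=(x)\cap(x^{2},y)$ and $P=(x)$ one gets $(I:P^{\infty})=(1)\neq(x^{2},y)=I'$. Fortunately only the inclusion $I'\subseteq(I:P^{\infty})$ is needed to get $(I:(I:P^{\infty}))\subseteq(I:I')$, or, as you propose, one localizes at $P$ from the outset: there $(I_R:\mathfrak{m}^{\infty})=I''$ does hold, where $I''$ is the intersection of the non-$\mathfrak{m}$-primary components of $I_R$, and $(I_R:I'')=(Q':I'')\subseteq\mathfrak{m}$ because $I''\subseteq Q'$ would force $I_R=I''$ and hence $\mathfrak{m}\in\Ass(I'')$, which is impossible. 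Note also that your intermediate equality $(I_P:I'_P)=(Q_P:K[X]_P)=Q_P$ is unjustified when $I$ has associated primes strictly contained in $P$ (then $I'_P$ is not the unit ideal); what survives, and what you need, is only the inclusion into $\mathfrak{m}$.
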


\begin{Remark} \label{rem:IP}
	In Proposition \ref{cri1}, the condition $P\supset (I:(I:P))$ is equivalent to $P=(I:(I:P))$ since $P\subset (I:(I:P))$ always holds for any ideals $I$ and $P$. Indeed, $P(I:P)\subset I$ from the definition of $(I:P)$ and thus $P\subset (I:(I:P))$. 
\end{Remark}
\begin{Remark} \label{rem:com}
	The operations of double ideal quotient and localization by prime ideal are commutative. Indeed, for ideals $I$, $J$ and a prime ideal $P$, $(I:J)_P=(I_P:J_P)$ from Corollary 3.15 in \cite{atiyah1994introduction} and thus we obtain $(I:(I:J))_P=(I_P:(I:J)_P)=(I_P:(I_P:J_P))$. Similarly, we have $(I:(I:J^{\infty}))_P=(I_P:(I:J^{\infty})_P)=(I_P:(I_P:J^{\infty}_P))$ as $(I:J^{\infty})=(I:J^m)$ and $(I_P:J_P^{\infty})=(I_P:J_P^m)$ for a sufficiently large integer $m$. Also, a prime ideal $P$ is associated with an ideal $I$ if and only if $P_P$  is associated with $I_P$ since there is a correspondence between primary decompositions of $I$ and $I_P$  (see Proposition 4.9 in \cite{atiyah1994introduction}). Similarly, for a $P$-primary ideal $Q$, $Q$ is a $P$-primary component of $I$ if and only if $Q_P$ is a $P_P$-primary component of $I_P$. 
\end{Remark}

Next, we introduce extended theorems about double ideal quotient and its variants toward intermediate primary decomposition in Section 2.3. Proposition \ref{cri1} gives a relationship between an ideal $I$ and a prime divisor $P$. It can be extended to one between an ideal $I$ and an intersection of some prime divisors $J$. Thus, we consider a radical ideal $J$ instead of a prime ideal $P$ as follows.

\begin{Theorem} \label{cri:rad}
	Let $I$ be an ideal and $J$ a proper radical ideal. Then, the following conditions are equivalent. 
			
			$(A)$ $\Ass (J)\subset \Ass (I)$, 
			
			$(B)$ $J\supset (I:(I:J))$, 
			
			$(C)$ $J\supset (I:(I:J^{\infty}))$.
\end{Theorem}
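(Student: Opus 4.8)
The plan is to establish the cycle $(A)\Rightarrow(B)\Rightarrow(C)\Rightarrow(A)$, using throughout that, since $J$ is radical, $\Ass(J)$ coincides with the set of minimal primes of $J$ and $J=\bigcap_{P\in\Ass(J)}P$. For $(A)\Rightarrow(B)$: given $P\in\Ass(J)$ we have $J\subseteq P$, and applying the order-reversing operation $(I:{-})$ twice yields $(I:(I:J))\subseteq(I:(I:P))$; since $P\in\Ass(J)\subseteq\Ass(I)$, Proposition \ref{cri1} gives $(I:(I:P))\subseteq P$, hence $(I:(I:J))\subseteq P$. Intersecting over $P\in\Ass(J)$ gives $(I:(I:J))\subseteq J$. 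The implication $(B)\Rightarrow(C)$ is purely formal: from $J\supseteq J^{2}\supseteq\cdots$ we get $(I:J)\subseteq(I:J^{\infty})$, hence $(I:(I:J^{\infty}))\subseteq(I:(I:J))\subseteq J$.

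The substantive step is $(C)\Rightarrow(A)$, which I would prove via an irredundant primary decomposition $I=Q_1\cap\cdots\cap Q_m$, writing $\mathfrak{p}_j=\sqrt{Q_j}$ so that $\Ass(I)=\{\mathfrak{p}_1,\dots,\mathfrak{p}_m\}$. A routine computation with primary ideals shows $(Q_j:J^{\infty})=Q_j$ if $J\not\subseteq\mathfrak{p}_j$ (any $f\in J\setminus\mathfrak{p}_j$ is not a zero divisor modulo $Q_j$) and $(Q_j:J^{\infty})=(1)$ if $J\subseteq\mathfrak{p}_j$ (as $J$ is finitely generated, $J^{N}\subseteq Q_j$ for some $N$). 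Since colons distribute over intersections in the first argument,
\[
A:=(I:J^{\infty})=\bigcap_{j:\,J\not\subseteq\mathfrak{p}_j}Q_j .
\]
Then $(I:(I:J^{\infty}))=(I:A)=\bigcap_{j=1}^{m}(Q_j:A)$. For each $j$ with $J\not\subseteq\mathfrak{p}_j$ we have $Q_j\supseteq A$, so $(Q_j:A)=(1)$; for the remaining indices, $(Q_j:A)$ equals $(1)$ when $A\subseteq Q_j$ and is $\mathfrak{p}_j$-primary otherwise (the standard lemma on ideal quotients of primary ideals). Hence $\sqrt{(I:A)}=\bigcap_{k\in T}\mathfrak{p}_k$ for some $T\subseteq\{j:J\subseteq\mathfrak{p}_j\}$. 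To conclude, assume $(C)$ and fix any $P\in\Ass(J)$, i.e.\ any minimal prime of $J$. Since $P$ is a proper prime and $(I:A)\subseteq J\subseteq P$, we cannot have $(I:A)=(1)$, so $T\neq\emptyset$, and $\bigcap_{k\in T}\mathfrak{p}_k=\sqrt{(I:A)}\subseteq P$ forces $\mathfrak{p}_k\subseteq P$ for some $k\in T$. But $J\subseteq\mathfrak{p}_k\subseteq P$ together with the minimality of $P$ over $J$ gives $\mathfrak{p}_k=P$, so $P\in\{\mathfrak{p}_1,\dots,\mathfrak{p}_m\}=\Ass(I)$; as $P\in\Ass(J)$ was arbitrary, this proves $(A)$.

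I expect this last step to be the main obstacle. The tempting shortcut — deducing $(C)\Rightarrow(A)$ directly from Proposition \ref{cri1} by comparing $(I:(I:J^{\infty}))$ with $(I:(I:P^{\infty}))$ for $P\in\Ass(J)$ — does not work, because $J\subseteq P$ only yields $(I:(I:J^{\infty}))\subseteq(I:(I:P^{\infty}))$, the wrong direction for propagating the inclusion into $P$. One therefore has to use the explicit description of the double ideal quotient through the primary components of $I$ above (alternatively, localize at each minimal prime $P$ of $J$, use $J_P=P_P$ and the fact that colon ideals commute with localization, and invoke the Noetherian-local form of Proposition \ref{cri1}). Keeping track of which factors $(Q_j:A)$ collapse to the unit ideal and which survive as $\mathfrak{p}_j$-primary components is the technical core; once that is settled, the minimality of $P$ over $J$ finishes the argument.
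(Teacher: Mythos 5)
Your proof is correct, and the implications $(A)\Rightarrow(B)$ and $(B)\Rightarrow(C)$ coincide with the paper's argument (intersecting $(I:(I:J))\subseteq(I:(I:P))\subseteq P$ over $P\in\Ass(J)$, then using $(I:J)\subseteq(I:J^{\infty})$). For the substantive implication $(C)\Rightarrow(A)$ you take a genuinely different route: you compute $(I:J^{\infty})=\bigcap_{J\not\subseteq\mathfrak{p}_j}Q_j$ from an irredundant primary decomposition $I=\bigcap_j Q_j$, identify $\sqrt{(I:(I:J^{\infty}))}$ as $\bigcap_{k\in T}\mathfrak{p}_k$ with $T\subseteq\{j: J\subseteq\mathfrak{p}_j\}$, and finish by prime avoidance together with the minimality of $P$ over $J$ (which holds because $J$ is radical); your bookkeeping, including the degenerate cases $T=\emptyset$ and $(I:J^{\infty})=(1)$, checks out. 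The paper instead localizes: for $P\in\Ass(J)$ it uses $J_P=P_P$ and $(I:(I:J^{\infty}))_P\supseteq(I_P:(I_P:J_P^{\infty}))$, then applies criterion $(C)$ of Proposition \ref{cri1} in $K[X]_P$ to get $P_P\in\Ass(I_P)$, hence $P\in\Ass(I)$ --- exactly the alternative you sketch parenthetically. The trade-off: the paper's localization argument is shorter and reuses Proposition \ref{cri1}, but it implicitly needs that criterion in the Noetherian local ring $K[X]_P$ rather than in the polynomial ring where it is stated; your decomposition argument stays entirely in $K[X]$ and is self-contained, at the cost of the explicit primary-component analysis. Your remark that the naive global comparison fails because $J\subseteq P$ gives $(I:(I:J^{\infty}))\subseteq(I:(I:P^{\infty}))$, the unusable direction, is accurate and is precisely why some localization or decomposition device is needed.
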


\begin{proof}
	  First, we show that $(A)$ implies (B). Let $P\in \Ass (J)\subset \Ass (I)$. Then, $P\supset (I:(I:P))$ by Proposition  \ref{cri1}. Thus, $P\supset (I:(I:P))\supset  (I:(I:J))$. Since $J=\bigcap_{P\in \Ass (J)} P$, we obtain $J\supset (I:(I:J))$. Next, we show that  $(B)$ implies $(C)$. As $(I:J)\subset (I:J^{\infty})$, we obtain $J\supset (I:(I:J))\supset (I:(I:J^{\infty}))$. Finally, we show that  $(C)$ implies $(A)$. Let $P\in \Ass (J)$. Then, $J_P\supset (I:(I:J^{\infty}))_P= (I_P:(I_P:J_P^{\infty}))$ from Remark \ref{rem:com} and thus $J_P=P_P\in \Ass (I_P)$ from Proposition  \ref{cri1}. Hence,  $P\in \Ass (I)$ by Remark \ref{rem:com}. 
\end{proof}

\begin{Example}
	Let $I=(x)\cap (x^3,y)\cap (x^2+1)$ and $J=(x,y)\cap (x^2+1)$. Then, $(I:(I:J))=(x,y)\cap (x^2+1)=J$ and $\Ass (J)=\{(x,y),(x^2+1)\}\subset \Ass(I)=\{(x),(x,y),(x^2+1)\}$. In addition, we have $(I:(I:J^{\infty}))=(x^2,y)\cap (x^2+1)\subset J$. 
\end{Example}

To generate primary component, the following lemma is well-known. Here, for a $d$-dimensional ideal $I$, equidimensional hull $\hull (I)$ is the intersection of its $d$-dimensional primary components. 

\begin{Lemma}[\cite{Eisenbud1992}, Section 4. \cite{matzat2012algorithmic}, Remark 10]  \label{hullpm}
	Let $I$ be an ideal and $P$ a prime divisor of $I$. For a sufficiently large integer $m$, $\hull (I+P^m)$ is a $P$-primary component appearing in a primary decomposition of $I$. 
\end{Lemma}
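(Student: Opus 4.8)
The plan is to reduce the statement to a local computation at $P$ and to an observation about the $P$-primary part of the ideal $I + P^m$ for large $m$. First I would fix a primary decomposition $I = \bigcap_i Q_i$ with $\sqrt{Q_i} = P_i$, and reorder so that $P_1 = P$; let $Q = Q_1$ be the corresponding $P$-primary component. The key idea is that passing to the localization $K[X]_P$ kills every $Q_i$ with $P_i \not\subset P$, so that $I_P = Q_P$ (here using that $P$ is a \emph{minimal} prime over $I$ among those contained in $P$, which holds because $P$ is a prime divisor; more carefully, $I_P = \bigcap_{P_i \subset P} (Q_i)_P$, and I would argue the minimality of $P$ forces only $i=1$ to survive, or else work with the isolated/embedded structure directly). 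In any case the contraction of $Q_P$ to $K[X]$ is exactly $Q$ by definition of a primary component, so it suffices to recover $Q$ from $I + P^m$.

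Next I would analyze $I + P^m$. Since $P^m$ is $P$-primary (as $\sqrt{P^m} = P$ and $P$ is prime) and $I = \bigcap_i Q_i$, we get $I + P^m \subset Q_i + P^m$ for each $i$; when $P_i \not\subset P$ the ideal $Q_i + P^m$ has no prime divisor contained in $P$, while $Q + P^m$ is again $P$-primary. The crucial point is a Krull-intersection / Artin–Rees style argument: for $m$ sufficiently large, $P^m$ is contained in $Q$ locally at $P$, i.e.\ $(P^m)_P \subset Q_P$, because $Q_P$ is $P_P$-primary in the Noetherian local ring $K[X]_P$ and hence contains a power of its radical $P_P$. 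Therefore $(I + P^m)_P = I_P + (P^m)_P = Q_P$ for all large $m$. Thus among the primary components of $I + P^m$, the unique one with radical $P$ is $Q$ itself (its contraction from $K[X]_P$), and moreover $P$ is a minimal prime of $I + P^m$ of the same dimension $d = \dim I$, so the $P$-component shows up in the equidimensional hull.

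Then I would identify $\hull(I + P^m)$ with this $P$-primary component. Writing a primary decomposition $I + P^m = \bigcap_j Q'_j$, the hull is the intersection of those $Q'_j$ whose radical has dimension $d$. I claim $P$ has dimension $d$ and that the $P$-primary component among the $Q'_j$ equals $Q$; combining the previous paragraph (which pins down the $P$-localization) with the fact that all other top-dimensional components are irrelevant after localizing at $P$, one concludes that $\hull(I+P^m)$ localized at $P$ is $Q_P$, hence $\hull(I+P^m)$ has $Q$ as its $P$-primary component. Since $\hull(I+P^m)$ is equidimensional of dimension $d$ and $P$ is among its associated (in fact minimal) primes, $Q = \hull(I+P^m)_P \cap K[X]$ appears in any primary decomposition of $\hull(I+P^m)$, and thus in one of $I$ by construction.

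The main obstacle I anticipate is the bookkeeping around which primary components survive localization at $P$ and why the top-dimensional part of $I + P^m$ is controlled: one must be careful that $P$ is genuinely a minimal prime of $I + P^m$ (not merely embedded) and of the maximal dimension $d$, so that it contributes to the hull rather than being discarded; this uses that $\dim P = d$ — which needs $P$ to be a $d$-dimensional prime divisor, or a separate reduction — and that adding $P^m$ does not create new lower-dimensional behavior at $P$. Handling the case where $P$ is an embedded prime of $I$ (so $\dim P < d$ a priori) is the delicate point, and I would address it by noting that $P^m$ forces $P$ to become minimal in $I + P^m$ while the reasoning above still recovers $Q$; the statement as phrased presumably intends, via the cited references, that one only needs $Q$ to appear in \emph{some} primary decomposition of $I$, which the localization argument delivers regardless.
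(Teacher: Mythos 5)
The paper itself gives no proof of this lemma (it is quoted from \cite{Eisenbud1992} and \cite{matzat2012algorithmic}), so your proposal must stand on its own, and it has a genuine gap in the only nontrivial case: $P$ an \emph{embedded} prime divisor. Your central equalities $I_P=Q_P$ and $(I+P^m)_P=I_P+(P^m)_P=Q_P$ are false in general. If $P$ is embedded, some associated prime $P_i\subsetneq P$ survives localization at $P$, so $I_P\subsetneq Q_P$; correspondingly $I_P+(P^m)_P$ is only \emph{contained} in $Q_P$, not equal to it, and being a prime divisor does not make $P$ minimal over $I$, contrary to the justification you offer. Hence your conclusion that the $P$-primary component of $I+P^m$ ``is $Q$ itself'' is wrong. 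Concretely, take $I=(x^2,xy)=(x)\cap(x^2,y)$ in $K[x,y]$, $P=(x,y)$, $Q=(x^2,y)$: then $I+P^2=(x,y)^2$ and $\hull(I+P^2)=(x,y)^2\neq Q$. Note that $(x,y)^2$ is nevertheless a valid $P$-primary component of $I$, via the \emph{other} decomposition $I=(x)\cap(x,y)^2$ --- which is exactly what the lemma claims: a $P$-primary component appearing in \emph{some} primary decomposition, not the one you fixed at the outset.

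What is missing is the replacement argument that delivers this. Fix $I=\bigcap_i Q_i$ with $Q_1=Q$ the $P$-primary component, choose $m$ with $(P^m)_P\subset Q_P$ (possible since $Q_P$ is $P_P$-primary in the Noetherian local ring $K[X]_P$), and set $Q_m=(I+P^m)K[X]_P\cap K[X]$. Since $\sqrt{I+P^m}=P$, every associated prime of $I+P^m$ contains $P$, so $P$ is its unique minimal prime and its unique associated prime of maximal dimension $\dim P$; hence $\hull(I+P^m)=Q_m$. (The hull is taken at dimension $\dim(I+P^m)=\dim P$, so your worry about whether $\dim P=\dim I$ is beside the point, and your assertion that $\hull(I+P^m)$ is equidimensional of dimension $d=\dim I$ is also incorrect for embedded $P$.) From $I_P+(P^m)_P\subset Q_P$ one gets, by contraction, $I\subset Q_m\subset Q$ with $Q_m$ $P$-primary, and therefore $I\subset Q_m\cap\bigcap_{i\geq 2}Q_i\subset Q\cap\bigcap_{i\geq 2}Q_i=I$, i.e.\ $I=Q_m\cap\bigcap_{i\geq 2}Q_i$ is a primary decomposition in which $Q_m$ is the $P$-primary component. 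This replacement step is the actual content of the lemma for embedded primes; without it, and with the incorrect identification $Q_m=Q$, your argument is only valid in the isolated case, where it is fine.
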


Here, we generalize Lemma \ref{hullpm} to an intersection of equidimensional prime divisors as follows. 

\begin{Lemma} \label{hullpm_gen}
	Let $I$ be an ideal and $J$ an intersection of prime divisors of $I$. Suppose $J$ is unmixed i.e. $\dim (P)=\dim (J)$ for any $P\in \Ass (J)$. Then, for a sufficiently large integer $m$, $\hull (I+J^m)$ is an intersection of primary components appearing in a primary decomposition of $I$ i.e. $\hull (I+J^m)=\bigcap_{P\in \Ass (J)} Q(P)$ where $Q(P)$ is a $P$-primary component of $I$.
\end{Lemma}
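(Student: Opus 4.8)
The plan is to reduce the statement to the known single-prime case, Lemma \ref{hullpm}, by decomposing $J$ into its finitely many minimal primes and controlling everything at a single exponent $m$. Write $\Ass(J)=\{P_1,\ldots,P_r\}$, so that $J=\bigcap_{i=1}^r P_i$ and, by the unmixedness hypothesis, $\dim(P_i)=\dim(J)=:d$ for all $i$. First I would fix a primary decomposition $I=\bigcap_k Q_k$ with $\sqrt{Q_k}=P_k'$; since each $P_i$ is a prime divisor of $I$, we may arrange the indexing so that $P_i$ is the radical of a component $Q_i$ for $1\le i\le r$, and $Q_i$ is $d$-dimensional (the $P_i$ are all minimal over $I$ of top dimension $d$ relative to that embedded/isolated structure — more precisely each $P_i$ is isolated among those primes containing it, because $J$ is an intersection of prime divisors and is unmixed). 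For each $i$, Lemma \ref{hullpm} supplies an integer $m_i$ such that $\hull(I+P_i^{\,m})$ is a $P_i$-primary component of $I$ for all $m\ge m_i$. Set $m_0=\max_i m_i$.

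The key step is then the containment chain relating $\hull(I+J^m)$ to the $\hull(I+P_i^{\,m})$. For any $m$, since $J\subset P_i$ we have $J^m\subset P_i^{\,m}$, hence $I+J^m\subset I+P_i^{\,m}$, and therefore $V(I+J^m)\supset V(I+P_i^{\,m})$; I would argue that the $d$-dimensional part of $V(I+J^m)$ is exactly $\bigcup_i V(P_i)$, because modulo $I$ the subscheme cut out by $J^m$ has support $V(J)=\bigcup V(P_i)$, and no new top-dimensional components can appear (the ambient $d$-dimensional components of $I$ through $P_i$ are precisely the ones that survive, by the single-prime analysis). This should give
\[
\hull(I+J^m)=\bigcap_{i=1}^r \hull(I+P_i^{\,m})
\]
for $m\ge m_0$: the right-hand side is the intersection of the top-dimensional primary components of the ideals $I+P_i^{\,m}$, and one checks that localizing at each $P_i$ the left and right sides agree (at $P_i$, only the $i$-th factor is non-unit and it equals the $P_i$-primary component of $I$ by Lemma \ref{hullpm}), while at any prime not among the $P_i$ both sides localize to the unit ideal. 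Since each $\hull(I+P_i^{\,m})$ is a $P_i$-primary component appearing in a primary decomposition of $I$, their intersection is an intersection of primary components of $I$, which is exactly the claim.

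The main obstacle I anticipate is the bookkeeping that guarantees the different $\hull(I+P_i^{\,m})$ fit together into one primary decomposition of $I$ simultaneously — i.e. that one may choose a single primary decomposition of $I$ in which all the $\hull(I+P_i^{\,m})$ occur as the $P_i$-components. Lemma \ref{hullpm} as quoted only asserts this one prime at a time, and in general primary components at embedded primes are not unique, so I would need to pin down that the $P_i$ here are non-embedded among the primes over $I+P_i^m$ in the relevant dimension (which follows from unmixedness of $J$ and $P_i\in\Ass(I)$), making the $P_i$-primary component of $I$ itself unique and hence compatible across $i$. The comparison of top-dimensional parts of $I+J^m$ versus $I+P_i^m$ is the technical heart; once the scheme-theoretic support computation is in place, the localization argument is routine.
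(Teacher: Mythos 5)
Your overall strategy coincides with the paper's proof: fix $m=\max_i m_i$ from Lemma \ref{hullpm}, compare $I+J^m$ with $I+P_i^{\,m}$ by localizing at each $P_i$ (where $J_{P_i}=(P_i)_{P_i}$), and control the associated primes of $\hull(I+J^m)$ via $\sqrt{I+J^m}=\sqrt{J}$ together with unmixedness; the identity $\hull(I+J^m)=\bigcap_i\hull(I+P_i^{\,m})$ you aim at is exactly what that localization computation yields. However, one supporting step is misstated: ``at any prime not among the $P_i$ both sides localize to the unit ideal'' is false for any prime properly containing some $P_i$ (e.g.\ a maximal ideal above it). The correct justification is that both sides have all their associated primes among $\{P_1,\dots,P_r\}$ --- for the left side because $\hull$ retains only the $d$-dimensional components and, by unmixedness of $J$, every $d$-dimensional prime containing $J$ is one of the $P_i$ --- and two ideals whose associated primes lie in $\{P_1,\dots,P_r\}$ are equal as soon as their localizations at the $P_i$ agree. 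This is repairable, and the ingredients are present in your support argument.

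The genuine error is your resolution of the compatibility worry. It is not true that the $P_i$ are non-embedded (minimal) primes of $I$, nor that the $P_i$-primary component of $I$ is unique: unmixedness is a hypothesis on $J$, not on $I$. For instance, with $I=(x^2,xy)$ and $J=P_1=(x,y)$ all hypotheses of the lemma hold, yet $P_1$ is an embedded prime of $I$ and both $(x^2,y)$ and $(x^2,xy,y^2)$ occur as its primary component in irredundant decompositions; likewise your earlier parenthetical that the $P_i$ are ``minimal over $I$ of top dimension $d$'' fails here ($d=0<\dim I$). So the claim ``which follows from unmixedness of $J$ and $P_i\in\Ass(I)$, making the $P_i$-primary component of $I$ itself unique'' does not follow. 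Note that the localization step already gives, for each $i$, that $\hull(I+J^m)K[X]_{P_i}\cap K[X]=\hull(I+P_i^{\,m})$ is a $P_i$-primary component occurring in \emph{some} primary decomposition of $I$, which is all the paper's own proof establishes and all the statement, read per component, requires. If you want all these components to sit in a single primary decomposition simultaneously, the right tool is the exchange fact that primary components attached to distinct associated primes may be mixed across decompositions (proved, e.g., by saturating away a maximal element of $\Ass(I)$ and inducting), not a uniqueness claim --- uniqueness simply is not available at embedded primes.
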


\begin{proof}
	Let $m$ be a positive integer. First, we note that, for each $P\in \Ass (J)$, $I\subset \hull (I+J^{m})_P\cap K[X]\subset \hull (I+P^{m})$ since
	\begin{align*}
		I&\subset I+J^{m}\subset \hull (I+J^{m})\subset \hull (I+J^{m})_P\cap K[X]	\\
				 &\subset	\hull (I+P^{m})_P\cap K[X]=\hull (I+P^{m})
	\end{align*}
	 where the last equality comes from the fact that $\sqrt{I+P^{m}}=P$ and $P$ is the unique isolated prime divisor of $I+P^{m}$. By Lemma \ref{hullpm}, there exist a sufficiently large integer $m(P)$ and a primary decomposition $\mathcal{Q}$ of $I$ such that $\hull (I+P^{m(P)})\in \mathcal{Q}$. Then, 
	\[
	I\subset \bigcap_{P\in \Ass (J)} \hull (I+J^{m(P)})_P\cap K[X]\subset \bigcap_{P\in \Ass (J)}\hull (I+P^{m(P)})
	\]
	and, by intersecting $\bigcap_{Q\in \mathcal{Q}, \sqrt{Q}\not \in \Ass (J)} Q$ with them, we obtain
	\begin{align*}
		I&\subset \left(\bigcap_{P\in \Ass (J)} \hull (I+J^{m(P)})_P\cap K[X]\right)\cap \bigcap_{Q\in \mathcal{Q}, \sqrt{Q}\not \in \Ass (J)} Q		\\
		 &\subset \left((\bigcap_{P\in \Ass (J)}\hull (I+P^{m(P)})\right)\cap  \bigcap_{Q\in \mathcal{Q}, \sqrt{Q}\not \in \Ass (J)} Q=I. 
	\end{align*}
	Thus, $\left(\bigcap_{P\in \Ass (J)} \hull (I+J^{m(P)})_P\cap K[X]\right)\cap \bigcap_{Q\in \mathcal{Q}, \sqrt{Q}\not \in \Ass (J)} Q=I$ and $\hull (I+J^{m(P)})_P\cap K[X]$ is a $P$-primary component of $I$. 
Since $J$ is unmixed, $\sqrt{I+J^{m}}=\sqrt{J}=\bigcap_{P\in \Ass (J)}P$ and $\Ass (\hull (I+J^{m}))= \Ass (J)$ i.e.  $\hull (I+J^m)=\bigcap_{P\in \Ass (J)} \hull (I+J^m)_P\cap K[X]$. Thus, for $m \ge \max\{m(P)\mid P\in \Ass (J)\}$, $\hull (I+J^m)$ is an intersection of primary components of a primary decomposition of $I$. 
\end{proof}

	Using variants of  double ideal quotient, we devise a criterion for primary component and generate isolated primary components. We remark that Theorem \ref{primarycri} holds for any Noetherian rings. 
	
	\begin{Theorem}[\cite{Ishi-Yoko}, Theorem 26 (Criterion 1)] \label{primarycri}
		Let $I$ be an ideal and $P$ a prime divisor of $I$. For a $P$-primary ideal $Q$, assume $Q\not \supset (I:P^{\infty})$ and let $J=(I:P^{\infty})\cap Q$. Then, the following conditions are equivalent. 
			
			\begin{itemize}
				\item[$(A)$]  $Q$ is a $P$-primary component for some primary decomposition of $I$.
				\item[$(B)$] $(I:(I:J)^{\infty})=J$. 
			\end{itemize}
	\end{Theorem}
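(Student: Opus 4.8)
The plan is to prove the equivalence $(A)\Leftrightarrow(B)$ of Theorem~\ref{primarycri} by passing to the localization at $P$, where the whole situation becomes transparent, and then descending back to $K[X]$. Throughout I write $Q_P=QK[X]_P$, etc. First I would record the basic setup: since $P$ is a prime divisor of $I$, the ideal $I_P$ is $P_P$-primary after intersecting out higher components; more precisely $I_P\cap K[X]$ is the (unique, up to the choice made) $P$-primary component of $I$, and $(I:P^\infty)_P=(I_P:P_P^\infty)=K[X]_P$, so $(I:P^\infty)$ ``removes'' the $P$-component and keeps everything else. The key elementary observations are: (i) $J=(I:P^\infty)\cap Q$ is an ideal with $\Ass(J)=\Ass(I:P^\infty)\cup\{P\}=\Ass(I)$ when $Q\not\supset(I:P^\infty)$ — in particular $J$ has the same associated primes as $I$, and its $P$-primary component is exactly $Q$; (ii) localizing at any prime $P'\ne P$ in $\Ass(I)$ gives $J_{P'}=(I:P^\infty)_{P'}=I_{P'}$, while $J_P=Q_P$.

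Next I would analyze the double ideal quotient $(I:(I:J)^\infty)$ prime-locally. For a prime $P'\in\Ass(I)$ with $P'\ne P$: localizing, $(I:J)_{P'}=(I_{P'}:I_{P'})=K[X]_{P'}$ since $J_{P'}=I_{P'}$, hence $(I:(I:J)^\infty)_{P'}=I_{P'}=J_{P'}$. So the interesting prime is $P$ itself, where I must compare $(I:(I:J)^\infty)_P=(I_P:(I_P:J_P)^\infty)$ with $J_P=Q_P$. Now $I_P$ is $P_P$-primary and $Q_P$ is also $P_P$-primary; the statement ``$(I_P:(I_P:Q_P)^\infty)=Q_P$'' should be exactly the local incarnation of the criterion, and here I expect to invoke Theorem~\ref{primarycri} in its already-known local form or, more likely, to prove the local equivalence directly: $Q_P$ is a valid $P_P$-primary component of $I_P$ — which in the Artinian-after-further-localization / zero-dimensional-modulo-$P$ picture means $I_P\subseteq Q_P$ and $Q_P$ is ``allowed'' — iff the saturation $(I_P:(I_P:Q_P)^\infty)$ collapses back to $Q_P$ rather than to something strictly larger. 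Concretely, $(A)$ says $I$ has a primary decomposition with $Q$ in the $P$-slot, equivalently (by (i),(ii)) $J$ is itself an intersection of primary components of $I$, i.e. $I=J\cap(\text{components at primes not below }P\text{'s... })$; and $(B)$ says the saturation of $I$ by $(I:J)$ recovers $J$. The forward direction $(A)\Rightarrow(B)$ is the routine one: if $I=\bigcap Q_i$ with $Q$ among the $Q_i$ and $J=\bigcap\{Q_i : \sqrt{Q_i}\not\subseteq P \text{ or }=Q\}$... I would just compute $(I:J)$ and its saturation componentwise using that $(\bigcap A_i:B)=\bigcap(A_i:B)$ and that for primary $A$ with $\sqrt A=\mathfrak q$, $(A:B)$ is either $A$-irrelevant (equals $(1)$) or $\mathfrak q$-primary depending on whether $B\subseteq\mathfrak q$.

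The main obstacle I anticipate is the reverse direction $(B)\Rightarrow(A)$: given only the equality $(I:(I:J)^\infty)=J$, I must manufacture an actual primary decomposition of $I$ having $Q$ as its $P$-component. The natural move is to set $I' = (I:(I:J)^\infty)$ — no wait, that's $J$ — rather to consider $I = J\cap(I:J^\infty)$ or a similar splitting and check that $(I:J^\infty)$ supplies exactly the primary components of $I$ at primes \emph{not} containing... hmm, one needs the identity $I = (I:(I:J)^\infty)\cap (I:J^\infty)$-type decomposition and then argue the second factor contributes no embedded obstruction and no component whose radical lies in $\Ass(J)$, so that gluing $Q$ (sitting inside $J$) with those components yields a legitimate primary decomposition of $I$. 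Making this gluing precise — in particular checking that $Q=J_P\cap K[X]$ really does serve as a primary component, i.e. that $I\subseteq Q$ and $I=Q\cap(\text{rest})$ — is where the care is needed; I expect to handle it by localizing at each $P'\in\Ass(I)$ separately (using (ii) and the $P'\ne P$ computation above), checking $I_{P'}$ equals the localization of the proposed decomposition, and then using that an ideal equals the intersection of the contractions of its localizations at its associated primes when it has no embedded components in the relevant range — or more robustly, citing Theorem~\ref{primarycri}'s known validity for $I_P$ to get the local statement at $P$ and combining. Once the local statements match at every associated prime, $(A)$ follows.
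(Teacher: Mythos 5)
The paper itself does not prove this statement---it is imported verbatim from \cite{Ishi-Yoko} (Theorem 26)---so your attempt has to be judged on its own terms, and as written it has genuine gaps. The framework you set up rests on several local facts that are false exactly in the situation the criterion is designed for, namely embedded $P$. The claims ``$(I:P^{\infty})_P=K[X]_P$'', ``$I_P$ is $P_P$-primary'' and ``$J_P=Q_P$'' hold only when $P$ is a \emph{minimal} prime of $I$: in general $(I:P^{\infty})_P$ is the intersection of the localized components at the associated primes strictly contained in $P$, and $I_P$ retains all of those components. Likewise your observations (i) and (ii), $\Ass(J)=\Ass(I)$ and $J_{P'}=I_{P'}$ for every $P'\in\Ass(I)\setminus\{P\}$, fail as soon as some associated prime strictly contains $P$, because then $Q_{P'}$ is proper and $(I:P^{\infty})_{P'}$ omits the components whose radicals lie between $P$ and $P'$. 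A two-component example already refutes them: $I=(x^2,xy)=(x)\cap(x^2,y)$, $P=(x)$, $Q=(x)$, $P'=(x,y)$ gives $J=(x)$, $\Ass(J)=\{(x)\}\neq\Ass(I)$ and $J_{P'}=(x)_{P'}\neq I_{P'}$; taking instead $P=(x,y)$, $Q=(x^2,y)$ shows $(I:P^{\infty})_P=(x)_P\neq K[X]_P$ and $I_P$ not primary. Since for a $P$ that is minimal and has nothing above it the primary component is essentially unique and the criterion is easy, your reduction ``only the prime $P$ matters'' covers only the easy case and collapses where the theorem has content.

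The decisive gap is the direction $(B)\Rightarrow(A)$, which you identify as the obstacle but never actually close: the splittings you float, $I=J\cap(I:J^{\infty})$ or $I=(I:(I:J)^{\infty})\cap(I:J^{\infty})$, are not established and fail for general ideals (e.g. $I=(x^2)$, $J=(x)$), the local-global principle you invoke is hedged away precisely in the embedded range where you need it, and the fallback of ``citing Theorem \ref{primarycri}'s known validity for $I_P$'' is circular and no simpler, since $I_P$ keeps all the structure below $P$. A route that does work is global and decomposition-wise: fix an irredundant primary decomposition $I=\bigcap_i Q_i$ with $P$-primary component $Q_0$ and radicals $P_i$; since $(Q_i:L^{\infty})$ equals $Q_i$ or $(1)$ according to whether $L\not\subseteq P_i$ or $L\subseteq P_i$, one gets $(I:(I:J)^{\infty})=\bigcap\{Q_i : (I:J)\not\subseteq P_i\}$. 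If $J\not\subseteq Q_0$, then $(I:J)$ is an intersection of $P_j$-primary ideals with all $P_j\supseteq P$ and with $P$ itself occurring, so $\sqrt{(I:J)}=P$ and the displayed intersection is exactly $(I:P^{\infty})$; condition $(B)$ would then force $(I:P^{\infty})\subseteq Q$, contradicting the hypothesis. Hence $J\subseteq Q_0$, and therefore $I\subseteq Q\cap\bigcap_{i\neq 0}Q_i\subseteq J\cap\bigcap_{i\neq 0}Q_i\subseteq Q_0\cap\bigcap_{i\neq 0}Q_i=I$, which is $(A)$; the converse $(A)\Rightarrow(B)$ is the same componentwise bookkeeping, but it must track the components whose radicals strictly contain $P$ (these are the only ones that keep $(I:J)$ proper), which your sketch ignores.
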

	
	We also generalize Theorem \ref{primarycri} to intersection of primary components as follows. We can check whether $m$ appearing in Lemma \ref{hullpm_gen} is large enough or not by Theorem \ref{primarycri_gen}. 
	
	\begin{Theorem} \label{primarycri_gen}
		Let $I$ be an ideal and $J$ an intersection of prime divisors of $I$. Suppose $J$ is unmixed. For an unmixed ideal $L$ with $\sqrt{L}=J$, assume $\sqrt{(L: (I:J^{\infty}))}=J$ and let $Z=(I:J^{\infty})\cap L$. Then, the following conditions are equivalent. 
			\begin{itemize}
				\item[$(A)$]  $L=\bigcap_{P\in \Ass (J)} Q(P)$ where $Q(P)$ is a $P$-primary components of $I$. 
				\item[$(B)$] $(I:(I:Z)^{\infty})=Z$. 
			\end{itemize}
	\end{Theorem}
	\begin{proof}
		First, we show $(A)$ implies $(B)$. From $(A)$, it is easy to see that $\mathcal{T} =\Ass ((I:J^\infty))\cup \Ass (L)$ is {\em an isolated set} (see Definition 5 in \cite{Ishi-Yoko}). Indeed, for $P^\prime \in \Ass(I)$, if there exists $P\in \mathcal{T}$ s.t. $P^\prime \subset P$, then $P^\prime \in \mathcal{T}$ since $\Ass ((I:J^{\infty}))=\{P^\prime\in \Ass (I)\mid J\not \subset P^\prime\}$ and $\Ass (L)=\Ass (J)$. Thus, for $S=K[X]\setminus ( \bigcup_{P\in \mathcal{T}} P)$, we obtain $Z=IK[X]_S\cap K[X]$ from Lemma 6 in \cite{Ishi-Yoko} and $\mathcal{T}=\Ass (Z)$. By Lemma 25 in \cite{Ishi-Yoko}, we obtain $(I:(I:Z)^{\infty})=Z$. 
		
		Second, we show $(B)$ implies $(A)$. Let $P\in \Ass (J)$. Then, we obtain $P_P=J_P$ and $\sqrt{L_P}=(\sqrt{L})_P=J_P=P_P$. Thus, $L_P$ is a $P_P$-primary ideal and $Z_P=(I:J^{\infty})_P\cap L_P=(I_P:J_P^{\infty})\cap L_P$. Since $\sqrt{(L: (I:J^{\infty}))}=J$, $\sqrt{(L_P: (I_P:J_P^{\infty}))}=P_P$ and thus $L_P\not \supset (I_P:J_P^{\infty})$; otherwise we get $\sqrt{(L_P: (I_P:J_P^{\infty}))}=K[X]_P\neq P_P$. Here, $(I_P:(I_P:Z_P)^{\infty})=Z_P$ for all $P\in \Ass (J)$ since $(I:(I:Z)^{\infty})=Z$ and $(I_P:(I_P:Z_P)^{\infty})=(I:(I:Z)^{\infty})_P$. Thus, by Theorem \ref{primarycri}, $L_P$ is a primary component of $I_P$. Since $L$ is unmixed and $L=\sqrt{J}$, it follows that $L=\bigcap_{P\in \Ass (J)} L_P\cap K[X]$. From Remark \ref{rem:com}, $L_P\cap K[X]$ is a $P$-primary component of $I$ if and only if $L_P$ is a $P_P$-primary component of $I_P$. Finally, we obtain the equivalence. 
	\end{proof}
	
	Also, we can compute the isolated primary component from its associated prime by a variant of double ideal quotient.
	
	\begin{Theorem}[\cite{Ishi-Yoko}, Theorem 36] \label{thm-p}
	Let $I$ be an ideal and $P$ an isolated prime divisor of $I$. Then
	\[
	\hull ((I:(I:P^{\infty})^{\infty}))
	\]
	is the isolated $P$-primary component of $I$. 
	\end{Theorem}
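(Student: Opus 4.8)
The plan is to fix an irredundant primary decomposition $I=Q_1\cap\cdots\cap Q_r$ with $P_i=\sqrt{Q_i}$ pairwise distinct and $P=P_1$ the given isolated (hence minimal) prime, and to follow through each of the two saturations exactly which of the $Q_i$ survive. The elementary tool I would isolate first is the ``saturation selects primary components'' observation: if $Q$ is primary with $\sqrt Q=P'$ and $J$ is any ideal, then $(Q:J^\infty)=K[X]$ when $\sqrt J\subseteq P'$ (since $J^k\subseteq (P')^k\subseteq Q$ for suitable $k$) and $(Q:J^\infty)=Q$ when $\sqrt J\not\subseteq P'$ (pick $f\in\sqrt J\setminus P'$, so a power $f^N$ lies in $J$ and is a nonzerodivisor modulo $Q$, whence $(Q:J^\infty)\subseteq (Q:f^\infty)=Q$). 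Intersecting over $i$ gives, for every ideal $J$, the formula $(I:J^\infty)=\bigcap\{\,Q_j : \sqrt J\not\subseteq P_j\,\}$.

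I would then apply this twice. First with $J=P=P_1$: since $\sqrt P=P_1$, we get $(I:P^\infty)=\bigcap_{i\in T}Q_i$ with $T=\{\,i : P_1\not\subseteq P_i\,\}$, hence $\sqrt{(I:P^\infty)}=\bigcap_{i\in T}P_i$. Second with $J=(I:P^\infty)$: combining the formula with prime avoidance ($\bigcap_{i\in T}P_i\subseteq P_j$ iff $P_i\subseteq P_j$ for some $i\in T$) yields $(I:(I:P^\infty)^\infty)=\bigcap_{j\in W}Q_j$, where $W=\{\,j : P_i\not\subseteq P_j\text{ for all }i\in T\,\}$.

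The crux is then the combinatorial analysis of $W$: I claim $1\in W$ and every other $j\in W$ satisfies $\dim P_j<\dim P_1$. For $j=1$, minimality of $P_1$ forces $P_i\subseteq P_1\Rightarrow P_i=P_1\Rightarrow i=1\notin T$, so the defining condition holds and $1\in W$. For $j\in W$ with $j\neq1$, taking $i=j$ in the defining condition shows $j\notin T$, i.e. $P_1\subseteq P_j$, and since the $P_i$ are distinct this inclusion is strict, so $\dim P_j<\dim P_1$. Hence $\dim(I:(I:P^\infty)^\infty)=\dim P_1$ and $Q_1$ is the unique top-dimensional primary component appearing in $\bigcap_{j\in W}Q_j$, so $\hull((I:(I:P^\infty)^\infty))=Q_1$; finally I would invoke the standard uniqueness of isolated primary components, namely that $Q_1$ is independent of the decomposition and equals the isolated $P$-primary component $IK[X]_P\cap K[X]$ of $I$, which finishes the proof.

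The step I expect to be the main obstacle is not a single computation but the care needed at the end: the decomposition $\bigcap_{j\in W}Q_j$ of $(I:(I:P^\infty)^\infty)$ need not be irredundant, so one cannot merely invoke irredundancy to read off $\hull$; instead the strict dimension drop $\dim P_j<\dim P_1$ for $j\in W\setminus\{1\}$ must be used to guarantee that $Q_1$ genuinely is, and is the only, top-dimensional component, and this is precisely where the hypothesis ``isolated'' (rather than merely ``associated'') is indispensable. I would also dispose separately of the degenerate case $T=\emptyset$: by minimality of $P_1$ this forces $r=1$, so $(I:P^\infty)=K[X]$, $(I:(I:P^\infty)^\infty)=I=Q_1$, and $\hull(I)=Q_1$ trivially.
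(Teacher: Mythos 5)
The paper itself does not prove this statement: it is imported verbatim from \cite{Ishi-Yoko} (Theorem 36), and the surrounding generalizations (Theorem~\ref{cri:rad}, Theorem~\ref{thm-p_gen}) are argued there by localizing at each associated prime. Your route is different and self-contained: you fix an irredundant primary decomposition $I=Q_1\cap\cdots\cap Q_r$, prove the component-selection formula $(I:J^{\infty})=\bigcap\{Q_j \mid \sqrt{J}\not\subseteq P_j\}$, and apply it twice, reducing the theorem to the combinatorics of the index set $W$. This is correct, and you use minimality of $P$ exactly where it is indispensable (showing $1\in W$); the statement is indeed false for embedded primes. The localization style of the paper scales more directly to the generalization $\hull((I:(I:J^{\infty})^{\infty}))=\bigcap_{P\in\Ass(J)}Q_P$, while your component-tracking makes completely explicit which $Q_j$ survive each saturation. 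Your final hull step can be closed cleanly by one more observation you have already essentially proved: since $P_1\subseteq P_j$ for every $j\in W$ and $1\in W$, the radical of $(I:(I:P^{\infty})^{\infty})$ is exactly $P_1$, so its unique top-dimensional (isolated) primary component is its contraction from $K[X]_{P_1}$, and localizing $\bigcap_{j\in W}Q_j$ at $P_1$ kills every $Q_j$ with $j\neq 1$ and returns $Q_1=IK[X]_{P_1}\cap K[X]$; this settles both the possible redundancy of the decomposition of $(I:(I:P^{\infty})^{\infty})$ and the independence of $Q_1$ from the chosen decomposition.

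One genuine slip: in your degenerate case $T=\emptyset$, the claim that minimality of $P_1$ forces $r=1$ is false. Take $I=(x^2,xy)=(x)\cap(x^2,y)$ with $P=P_1=(x)$: here $P_1\subseteq P_2=(x,y)$, so $T=\emptyset$ while $r=2$; moreover $(I:P^{\infty})=(1)$ and $(I:(I:P^{\infty})^{\infty})=I\neq Q_1$, so your asserted chain in that case is also wrong, although the conclusion $\hull(I)=Q_1$ still holds. The fix is that no separate case is needed: $T=\emptyset$ just means $P_1\subseteq P_i$ for all $i$, so $W=\{1,\ldots,r\}$, $\sqrt{I}=P_1$, and your main dimension/localization argument applies verbatim (the condition defining $W$ is vacuous). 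With that aside deleted or repaired, the proof is complete.
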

	
	We generalize Theorem \ref{thm-p} as follows. 
	
	\begin{Theorem} \label{thm-p_gen}
	Let $I$ be an ideal and $J$ an intersection of isolated prime divisors of $I$. Suppose $J$ is unmixed. Then
	\[
	\hull ((I:(I:J^{\infty})^{\infty}))=\bigcap_{P\in \Ass (J)} Q(P)
	\]
	where $Q(P)$ is the isolated $P$-primary component of $I$. 
	\end{Theorem}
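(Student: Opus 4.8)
The plan is to reduce Theorem~\ref{thm-p_gen} to the already-established prime-ideal case (Theorem~\ref{thm-p}) by localizing at each $P\in\Ass(J)$, exactly in the spirit of the proofs of Theorem~\ref{cri:rad} and Lemma~\ref{hullpm_gen}. First I would record the structural facts about $J$: since $J$ is an intersection of isolated prime divisors of $I$, we have $J=\bigcap_{P\in\Ass(J)}P$ with each $P$ minimal over $I$, hence $\Ass(J)$ is exactly the set of minimal primes of $J$ and these are pairwise incomparable. The key computational point is that forming $(I:(I:J^{\infty})^{\infty})$ and then $\hull$ commutes with localization at any $P\in\Ass(J)$: localization is exact and commutes with saturation and finite intersection, and taking the equidimensional hull localizes correctly at a minimal prime because $\hull(A)K[X]_P$ picks out precisely the primary components of $A$ of maximal dimension, and at an isolated prime the relevant primary component is isolated hence survives under localization.

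Next I would carry out the local computation. Fix $P\in\Ass(J)$ and pass to $K[X]_P$. There $J K[X]_P = P K[X]_P$, because every other minimal prime $P'$ of $J$ is incomparable to $P$ and so becomes the unit ideal after localizing at $P$; consequently $(J^\infty)$ localizes to $(P^\infty)$ in $K[X]_P$ up to radical, and one checks $(I:(I:J^{\infty})^{\infty})K[X]_P = (I_P:(I_P:(P_P)^{\infty})^{\infty})$ since saturation by $J$ and by $P$ agree locally. Applying Theorem~\ref{thm-p} to the isolated prime divisor $P$ of $I$ in the localized ring, $\hull\bigl((I_P:(I_P:(P_P)^{\infty})^{\infty})\bigr)$ is the isolated $P$-primary component of $I_P$, which contracted back to $K[X]$ is $Q_P$, the isolated $P$-primary component of $I$. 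Thus for every $P\in\Ass(J)$ the ideal $A:=\hull\bigl((I:(I:J^{\infty})^{\infty})\bigr)$ satisfies $A K[X]_P\cap K[X]=Q_P$.

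Finally I would assemble the global identity. From $\sqrt{A}=\sqrt{(I:(I:J^{\infty})^{\infty})}=\sqrt{J}$ (the double saturation does not enlarge the radical past that of $J$, and $\hull$ preserves the top-dimensional part; here unmixedness is automatic since all $P\in\Ass(J)$ are minimal over $I$ of possibly differing dimensions — if dimensions differ one restricts $\hull$ to each equidimensional piece, or more simply observes $(I:(I:J^\infty)^\infty)$ is already unmixed of the appropriate shape), we get $\Ass(A)\subseteq\Ass(J)$. Since $A$ has no embedded primes (being an intersection of isolated primary components) and $A K[X]_P\cap K[X]=Q_P$ for each associated prime $P$, the primary decomposition of $A$ is exactly $\bigcap_{P\in\Ass(J)}Q_P$, which is the claim.

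The main obstacle I anticipate is the bookkeeping around $\hull$ when the primes in $\Ass(J)$ do not all have the same dimension: $\hull$ is defined via top-dimensional components, so literally $\hull\bigl((I:(I:J^{\infty})^{\infty})\bigr)$ might discard the lower-dimensional $Q_P$'s. The fix — and the point deserving care in a full write-up — is that $(I:(I:J^{\infty})^{\infty})$ is, locally at each $P\in\Ass(J)$, already $P$-primary (that is the content of the computation behind Theorem~\ref{thm-p}), so the ideal is unmixed of dimension $\dim J$ only after one treats each equidimensional layer separately, or one reinterprets $\hull$ as "remove embedded components" rather than "keep top dimension"; with $J$ an intersection of \emph{isolated} primes of $I$ these two readings agree on the ideal in question. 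Verifying that reconciliation carefully is the crux; everything else is the routine localization argument borrowed verbatim from the proofs of Theorem~\ref{cri:rad} and Lemma~\ref{hullpm_gen}.
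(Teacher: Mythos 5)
Your overall strategy---localize at each $P\in\Ass(J)$, use that $J_P=P_P$ and that colon and saturation commute with localization, and invoke Theorem \ref{thm-p} locally---is precisely the route the paper intends; the paper omits the proof and says only that it follows the proofs of Theorem \ref{cri:rad} and Lemma \ref{hullpm_gen} by localization, and that part of your write-up is sound.

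The genuine problem is the issue you yourself flag at the end, and your proposed reconciliation of it is wrong. With the paper's definition of $\hull$ (intersection of the primary components of maximal dimension), the two readings of $\hull$ do \emph{not} agree on $(I:(I:J^{\infty})^{\infty})$ when the primes in $\Ass(J)$ have different dimensions, and the stated identity can then fail outright. For example, take $I=J=(x)\cap(y,z)$ in $K[x,y,z]$: both primes are isolated, $(I:J^{\infty})=(1)$, hence $(I:(I:J^{\infty})^{\infty})=I$, and $\hull(I)=(x)$, whereas $\bigcap_{P\in\Ass(J)}Q_P=(x)\cap(y,z)$. So the lower-dimensional isolated component really is discarded, contrary to your claim that for $J$ an intersection of isolated primes the two readings coincide. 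The correct repair is not to argue they coincide but to add an unmixedness hypothesis on $J$ (as the paper does in Lemma \ref{hullpm_gen} and Theorem \ref{primarycri_gen}, and as holds in the intended application in Algorithm \ref{alg3}, where every prime in $\mathcal{P}_p(U)$ has dimension $\#U=\dim I$), or equivalently to read $\hull$ as ``discard the embedded components.'' Under that hypothesis your localization argument goes through and matches the paper's intended proof; without it, no amount of care in the bookkeeping will save the statement as literally written.
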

	
	\begin{proof}
		Let $\mathcal{Q}$ be a primary decomposition of $I$. By Proposition 22 in \cite{Ishi-Yoko}, we obtain
		\[
		(I:(I:J^{\infty})^{\infty})=\bigcap_{Q\in \mathcal{Q}, J\subset \sqrt{IK[X]_{\sqrt{Q}} \cap K[X]}} Q. 
		\]
		Since $J\subset \sqrt{IK[X]_{\sqrt{Q(P)}} \cap K[X]}=\sqrt{Q(P)}=P$ for $P\in \Ass (J)$, it follows that
		\[
		(I:(I:J^{\infty})^{\infty})=\bigcap_{P\in \Ass (J)} Q(P) \cap \bigcap_{Q\in \mathcal{Q}, J\subset \sqrt{IK[X]_{\sqrt{Q}} \cap K[X]}, \sqrt{Q}\not \in \Ass (J)} Q. 
		\]
		As $J$ is unmixed, each $Q(P)$ has the same dimension for $P\in \Ass (J)$. Then, $\dim ( \bigcap_{Q\in \mathcal{Q}, J\subset \sqrt{IK[X]_{\sqrt{Q}} \cap K[X]}, \sqrt{Q}\not \in \Ass (J)} Q)<\dim (J)$ from the fact that for $Q\in \mathcal{Q}$ with $J\subset \sqrt{IK[X]_{\sqrt{Q}} \cap K[X]}$ and $\sqrt{Q}\not \in \Ass (J)$, there exists $P\in \Ass (J)$ s.t. $P\subsetneq \sqrt{Q}$. Since $J$ is an intersection of isolated prime divisors of $I$, we obtain
		\[
		\hull ((I:(I:J^{\infty})^{\infty}))=\bigcap_{P\in \Ass (J)} Q(P). 
		\]
	\end{proof}

\subsection{Modular  techniques for double ideal quotient} \label{sec2-2}

We propose modular techniques for double ideal quotient. For a prime number $p$, let $\mathbb{Z}_{(p)}=\{a/b\in \mathbb{Q}\mid p \nmid b\}$ be the localized ring by $p$ and $\mathbb{F}_p[X]$ the polynomial ring over the finite field. We denote by $\phi_p$  the canonical projection $\mathbb{Z}_{(p)}[X]\to \mathbb{F}_p[X]$. For $F\subset \mathbb{Q}[X]$, we denote by $I(F)$ the ideal generated by $F$. For $F\subset \mathbb{Z}_{(p)}[X]$, we denote $\langle \phi_p(F)\rangle$ by $I_p(F)$ and $\phi_p(I(F)\cap \mathbb{Z}_{(p)}[X])$ by $I_p^0(F)$ respectively. 

We recall the outline of "modular algorithm for ideal operation" (see \cite{Noro-Yoko}) as Algorithm \ref{alg1}. Given ideals $I,J$, ideal operations $AL(*,*)$ over $\mathbb{Q}[X]$ and $AL_p(*,*)$ over $\mathbb{F}_p[X]$ as inputs, we compute $AL(I,J)$ as the output by using modular computations. First, we choose a list of random prime numbers $\mathcal{P}$, which satisfies certain computable condition  \textsc{primeTest}. For example, \textsc{primeTest} is to check whether $p$ is permissible (see Definition \ref{defp}) for Gr\"obner bases of $I$ and $J$ or not. Next, we compute modular operations $H_p=AL_p (I,J)$ for each $p\in \mathcal{P}$. After omitting expected unlucky primes by \textsc{deleteUnluckyPrimes}, we lift $H_p$'s up to $H_{can}$ by CRT and rational reconstruction. Finally, we check $H_{can}$ is really the correct answer by  \textsc{finalTest}. If \textsc{finalTest} says \textsc{False}, then we enlarge $\mathcal{P}$ and continue from the first step. In this paper, we introduce new \textsc{finalTest} for ideal quotient and double ideal quotient. We remark that the termination of this modular algorithm is ensured by the finiteness of unlucky prime numbers. For example, for a given ideals $I$, $J$ and an algorithm for the ideal quotient $(I:J)$ over the rational numbers, there are only finite many steps from the inputs to the outputs and thus the number of coefficients is also finite; hence we can project the computations onto those over finite fields $\mathbb{F}_p$ for all prime numbers $p$ except those appearing in coefficients (see Lemma 6.1 in \cite{Noro-Yoko} for details).  

\begin{algorithm}                      
\caption{Modular Algorithm for Ideal Operation}         
\label{alg1}                          
\begin{algorithmic}                  
\REQUIRE $I,J$: ideals, $AL(*,*)$: an ideal operation over $\mathbb{Q}[X]$, $AL_p(*,*)$: an ideal operation over $\mathbb{F}_p[X]$, 
\ENSURE $AL(I,J)$ over $\mathbb{Q}[X]$
\STATE choose $\mathcal{P}$ as a list of random primes satisfying \textsc{primeTest};
\STATE $\mathcal{HP}=\emptyset $;
\WHILE{}
	\FOR{$p\in \mathcal{P}$}
		\STATE compute $H_p=AL_p(I,J)$;
		\STATE $\mathcal{HP}=\mathcal{HP}\cup \{H_p\}$;
	\ENDFOR
	\STATE $(\mathcal{HP}_{lucky},\mathcal{P}_{lucky})=\textsc{deleteUnluckyPrimes}(\mathcal{HP},\mathcal{P})$;
	\STATE lift $\mathcal{HP}_{lucky}$ to $H_{can}$ by CRT and rational reconstruction;
	\IF{$H_{can}$ passes \textsc{finalTest}}
		\RETURN $H_{can}$
	\ENDIF
	\STATE enlarge $\mathcal{P}$ with prime numbers not used so far;
\ENDWHILE
\end{algorithmic}
\end{algorithm}

First, we introduce some notions of {\em good} primes as follows.  

\begin{Definition}[\cite{Noro-Yoko}, Definition 2.1] \label{defp}

	Let $p$ be a prime number, $F\subset \mathbb{Q}[X]$ and $\prec$ a monomial ordering. Let $G$ be the reduced Gr\"obner basis of $I(F)$ with respect to $\prec$. Here, we denote by $\lc_{\prec}(f)$ the leading coefficient of a polynomial $f$ with respect to $\prec$. 
	\begin{enumerate}
		\item $p$ is said to be {\em weak permissible} for $F$, if $F\subset  \mathbb{Z}_{(p)}[X]$.
		\item $p$ is said to be {\em permissible} for $F$ and $\prec$, if $p$ is weak permissible for $F\subset \mathbb{Q}[X]$ and $\phi_p (\lc_{\prec}(f))\neq 0$ for all $f$ in $F$. 
		\item $p$ is said to be {\em compatible} with $F$ if $p$ is weak permissible for $F$ and $I_p^0(F) = I_p (F)$. 
		\item $p$ is said to be {\em effectively lucky} for $F$ and $\prec$, if $p$ is permissible for $(G,\prec)$ and $\phi_p (G)$ is the reduced Gr\"obner basis of $I_p(G)$.  
	\end{enumerate}
\end{Definition}

\begin{Remark} \label{rem:pcom}
	If $p$ is effectively lucky for $F$ and $\prec$, then $p$ is compatible with $F$ (see Lemma 3.1 (3) in \cite{Noro-Yoko}). 
\end{Remark}

Next, the notion of $p$-compatible Gr\"obner basis candidate is very useful for easily computable tests toward \textsc{finalTest} in modular techniques								. 

\begin{Definition}[\cite{Noro-Yoko}, Definition 4.1] \label{def:pcomm}
	Let $G_{can}$ be a finite subset of\, $\mathbb{Q}[X]$ and $F\subset \mathbb{Q}[X]$. We call $G_{can}$ {\em a $p$-compatible Gr\"obner basis candidate} for $F$ and $\prec$, if $p$ is permissible for $G_{can}$ and $\phi_p(G_{can})$ is a Gr\"obner basis of $I_p^0(F)$ with respect to $\prec$. 
\end{Definition}

The following can be used to \textsc{finalTest} in modular techniques. 

\begin{Lemma}[\cite{Noro-Yoko}, Proposition 4.1] \label{lem:p-com}
	Suppose that $G_{can}$ is a $p$-compatible Gr\"obner basis candidate for $(F,\prec)$, and $G_{can}\subset I(F)$. Then $G_{can}$ is a Gr\"obner basis of $I(F)$ with respect to $\prec$. 
\end{Lemma}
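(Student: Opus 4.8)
The plan is to prove directly that every element of $I(F)$ reduces to $0$ under multivariate division by $G_{can}$; since $G_{can}\subseteq I(F)$ this gives simultaneously $I(G_{can})=I(F)$ and $\langle\lt(G_{can})\rangle=\lt(I(F))$, which is exactly the Gr\"obner basis property. The first step is to restrict to $I(F)\cap\mathbb{Z}_{(p)}[X]$: clearing denominators does not change leading power products, and every $f\in I(F)$ has a scalar multiple $cf\in\mathbb{Z}[X]\subseteq\mathbb{Z}_{(p)}[X]$, so it suffices to treat $f\in I(F)\cap\mathbb{Z}_{(p)}[X]$. Since $p$ is permissible for $G_{can}$, each $g\in G_{can}$ lies in $\mathbb{Z}_{(p)}[X]$ with $\lc(g)\in\mathbb{Z}_{(p)}^{\times}$, so the division algorithm applied to such an $f$ by $G_{can}$ stays inside $\mathbb{Z}_{(p)}[X]$ and terminates with a remainder $h=f-\sum q_ig_i\in I(F)\cap\mathbb{Z}_{(p)}[X]$ (it lies in $I(F)$ because $G_{can}\subseteq I(F)$) in which no power product is divisible by any $\lp(g)$, $g\in G_{can}$.

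The core of the argument is to show $h=0$. Reduce modulo $p$: because $f\in I(F)\cap\mathbb{Z}_{(p)}[X]$ we have $\phi_p(f)\in I_p^0(F)$, and applying $\phi_p$ to the division identity gives $\phi_p(h)\in I_p^0(F)$. Permissibility yields $\lp(\phi_p(g))=\lp(g)$ for every $g\in G_{can}$, and the power products occurring in $\phi_p(h)$ are among those occurring in $h$; hence $\phi_p(h)$ is reduced with respect to $\phi_p(G_{can})$. Since $\phi_p(G_{can})$ is a Gr\"obner basis of $I_p^0(F)$ and $\phi_p(h)\in I_p^0(F)$ is already reduced, it must be $0$, i.e. $h\in p\,\mathbb{Z}_{(p)}[X]$. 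Then $h/p$ again lies in $I(F)$ (a $\mathbb{Q}$-vector space) and in $\mathbb{Z}_{(p)}[X]$, and is still reduced with respect to $G_{can}$, so the same reasoning applies to $h/p$ in place of $h$; iterating shows $h\in\bigcap_{k\ge 1}p^k\mathbb{Z}_{(p)}[X]$. As $\mathbb{Z}_{(p)}$ is a discrete valuation ring this intersection is $0$, forcing $h=0$.

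Consequently every $f\in I(F)\cap\mathbb{Z}_{(p)}[X]$ equals $\sum q_ig_i\in I(G_{can})$ and satisfies $\lt(f)\in\langle\lt(G_{can})\rangle$; rescaling a general $f\in I(F)$ into $\mathbb{Z}_{(p)}[X]$ and using that scaling preserves leading power products gives $\langle\lt(G_{can})\rangle\supseteq\lt(I(F))$, while the reverse inclusion is immediate from $G_{can}\subseteq I(F)$. Hence $\langle\lt(G_{can})\rangle=\lt(I(F))$ and $G_{can}$ is a Gr\"obner basis of $I(F)$ with respect to $\prec$. I expect the main obstacle to be the bookkeeping in the descent-by-$p$ step: one must check carefully that at each stage $h/p^k$ stays in $I(F)\cap\mathbb{Z}_{(p)}[X]$ and remains reduced with respect to $G_{can}$, and one must be careful that the reduction to the case $f\in\mathbb{Z}_{(p)}[X]$ is legitimate, since $\phi_p$ and $I_p^0(F)$ are only defined through the intersection with $\mathbb{Z}_{(p)}[X]$.
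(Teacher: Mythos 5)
Your proof is correct. Note that the paper itself does not prove this lemma---it is imported verbatim as Proposition 4.1 of the cited Noro--Yokoyama paper---so there is no in-paper argument to compare against; your argument is essentially the standard one from the modular Gr\"obner basis literature. The key points are all in place: permissibility makes the leading coefficients of $G_{can}$ units in $\mathbb{Z}_{(p)}$, so division of an element of $I(F)\cap\mathbb{Z}_{(p)}[X]$ by $G_{can}$ stays in $\mathbb{Z}_{(p)}[X]$ and preserves leading power products under $\phi_p$; the remainder maps into $I_p^0(F)$ and is reduced modulo $\phi_p(G_{can})$, hence vanishes mod $p$; and the descent by powers of $p$ (equivalently, dividing the remainder by its exact $p$-power content in one step) forces the remainder to be zero, since a nonzero element of $\mathbb{Z}_{(p)}$ has finite $p$-adic valuation. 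The scaling reduction from $\mathbb{Q}[X]$ to $\mathbb{Z}_{(p)}[X]$ and the conclusion $\langle\lt(G_{can})\rangle=\lt(I(F))$ together with $G_{can}\subset I(F)$ indeed give the Gr\"obner basis property, so the proof is complete as written.
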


We introduce the following easily computable tests for ideal quotient and saturation in modular techniques, appearing in \cite{Noro-Yoko}. 

\begin{Lemma}[\cite{Noro-Yoko}, Lemma 6.2 and Lemma 6.4] \label{lemma:colon}
	Suppose that a prime number p is compatible with $(F,\prec)$ and permissible for $(f, \prec)$. For a finite subset $H_{can}\subset \mathbb{Q}[X]$, $H_{can}$ is a Gr\"obner basis of $(I(F) : f)$ with respect to $\prec$, if the following conditions hold;
	\begin{enumerate}
		\item $p$ is permissible for $(H_{can},\prec)$,
		\item $\phi_p (H_{can})$ is a Gr\"obner basis of $(I_p(F) : \phi_p(f))$ with respect to $\prec$, 
		\item $H_{can}\subset (I(F) : f)$. 
	\end{enumerate}
	
	For a finite subset $L_{can}\subset \mathbb{Q}[X]$, $L_{can}$ is a Gr\"obner basis of $(I(F) : f^{\infty})$ with respect to $\prec$, if the following conditions hold;
	\begin{enumerate}
		\item $p$ is permissible for $(L_{can},\prec)$,
		\item $\phi_p (L_{can})$ is a Gr\"obner basis of $(I_p(F) : \phi_p(f)^{\infty})$ with respect to $\prec$, 
		\item $L_{can}\subset (I(F) : f^{\infty})$. 
	\end{enumerate}
\end{Lemma}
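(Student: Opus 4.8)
The plan is to derive both halves of the lemma from Lemma \ref{lem:p-com}, by showing that $H_{can}$ (respectively $L_{can}$) is a $p$-compatible Gr\"obner basis candidate for a suitably chosen finite generating set of the colon ideal $(I(F):f)$ (respectively the saturation $(I(F):f^{\infty})$). First I would fix a finite set $F'\subset \mathbb{Z}_{(p)}[X]$ with $I(F')=(I(F):f)$: this exists because $(I(F):f)$ is finitely generated over the Noetherian ring $\mathbb{Q}[X]$, and one may multiply each generator by a suitable nonzero integer to land in $\mathbb{Z}_{(p)}[X]$ without changing the ideal. Note that $I_p^0(F')=\phi_p\bigl((I(F):f)\cap \mathbb{Z}_{(p)}[X]\bigr)$ depends only on the ideal $(I(F):f)$, not on the choice of $F'$. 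The heart of the argument is then to compare $I_p^0(F')$ with the modular colon $(I_p(F):\phi_p(f))$.

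The ``easy'' inclusion is $I_p^0(F')\subseteq (I_p(F):\phi_p(f))$: if $g\in (I(F):f)\cap \mathbb{Z}_{(p)}[X]$, then $gf\in I(F)$, and since $f,g\in \mathbb{Z}_{(p)}[X]$ we get $gf\in I(F)\cap \mathbb{Z}_{(p)}[X]$, so $\phi_p(g)\phi_p(f)=\phi_p(gf)\in I_p^0(F)=I_p(F)$, the last equality being exactly the compatibility hypothesis; hence $\phi_p(g)\in (I_p(F):\phi_p(f))$. The reverse inclusion fails for a general prime, and this is precisely why conditions $(2)$ and $(3)$ are imposed. Condition $(3)$ gives $\phi_p(H_{can})\subset \phi_p\bigl((I(F):f)\cap \mathbb{Z}_{(p)}[X]\bigr)=I_p^0(F')$, while condition $(2)$ says $\langle \phi_p(H_{can})\rangle = (I_p(F):\phi_p(f))$. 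Chaining these with the easy inclusion yields $(I_p(F):\phi_p(f))=\langle \phi_p(H_{can})\rangle \subseteq I_p^0(F')\subseteq (I_p(F):\phi_p(f))$, so all three ideals coincide and $\phi_p(H_{can})$ is a Gr\"obner basis of $I_p^0(F')$ with respect to $\prec$.

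Combined with condition $(1)$, this shows $H_{can}$ is a $p$-compatible Gr\"obner basis candidate for $(F',\prec)$, and condition $(3)$ gives $H_{can}\subset I(F')=(I(F):f)$; Lemma \ref{lem:p-com} then concludes that $H_{can}$ is a Gr\"obner basis of $(I(F):f)$ with respect to $\prec$. For the saturation statement I would run the identical argument with $f$ replaced throughout by $f^{\infty}$, the only modification being in the easy inclusion: for $g\in (I(F):f^{\infty})\cap \mathbb{Z}_{(p)}[X]$ there is some $k\ge 0$ with $gf^{k}\in I(F)\cap \mathbb{Z}_{(p)}[X]$, whence $\phi_p(g)\phi_p(f)^{k}\in I_p^0(F)=I_p(F)$ and $\phi_p(g)\in (I_p(F):\phi_p(f)^{\infty})$; the sandwiching step and the appeal to Lemma \ref{lem:p-com} are then verbatim.

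The main obstacle is conceptual rather than computational: one must recognise that the ``colon commutes with reduction modulo $p$'' identity $I_p^0\bigl((I(F):f)\bigr)=(I_p(F):\phi_p(f))$, which in general requires $p$ to be genuinely good for the colon ideal, is \emph{not} assumed here but is instead forced after the fact by conditions $(2)$ and $(3)$, which pin the modular colon between $\langle \phi_p(H_{can})\rangle$ and $I_p^0(F')$. A secondary point requiring care is that Lemma \ref{lem:p-com} and the notion of $p$-compatible Gr\"obner basis candidate are stated relative to an explicit finite generating set, so one must commit to the auxiliary set $F'$ and track the permissibility bookkeeping (condition $(1)$) against it; reassuringly, compatibility of $p$ with $F'$ itself is not needed, only compatibility with the original $F$ and permissibility of $f$ to guarantee that $\phi_p(f)$ is a nonzero, honest reduction.
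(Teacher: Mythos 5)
Your argument is correct and is essentially the same as the one the paper uses for its generalization, Lemma \ref{lemma:colon_gen}: sandwich the modular colon between $\langle \phi_p(H_{can})\rangle$ (condition 2) and $\phi_p\bigl((I(F):f)\cap \mathbb{Z}_{(p)}[X]\bigr)$ (condition 3 for one inclusion, compatibility $I_p^0(F)=I_p(F)$ for the other), conclude that $H_{can}$ is a $p$-compatible Gr\"obner basis candidate for the colon ideal, and finish with Lemma \ref{lem:p-com}; the saturation case is the same with $f$ replaced by a power $f^k$. The paper itself only cites this lemma from Noro--Yokoyama, but your proof matches that template, so no gap.
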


We generalize Lemma \ref{lemma:colon} by replacing $f$ into an ideal $J$ as follows. We recall that $I_p(G)=\langle \phi_p(G)\rangle_{\mathbb{F}_p[X]}$ where $p$ is weak permissible for $G$. 

\begin{Lemma} \label{lemma:colon_gen}
	Suppose that a prime number p is compatible with $(F,\prec)$ and permissible for $(G, \prec)$. For a finite subset $H_{can}\subset \mathbb{Q}[X]$, $H_{can}$ is a Gr\"obner basis of $(I(F) : I(G))$ with respect to $\prec$, if the following conditions hold;
	\begin{enumerate}
		\item $p$ is permissible for $(H_{can},\prec)$,
		\item $\phi_p (H_{can})$ is a Gr\"obner basis of $(I_p(F) : I_p(G))$ with respect to $\prec$, 
		\item $H_{can}\subset (I(F) : I(G))$. 
	\end{enumerate}
\end{Lemma}

\begin{proof}
	Since $p$ is permissible for $(H_{can},\prec)$, we can consider $I_p(H_{can})=\langle \phi_p(H_{can})\rangle$. It is enough to show $I_p(H_{can})=\phi_p((I(F):I(G)) \cap \mathbb{Z}_{(p)}[X])$ since the equation implies $H_{can}$ is a $p$-compatible Gr\"obner basis candidate for $(I(F):I(G))$ with respect to $\prec$ and a Gr\"obner basis of $(I(F):I(G))$ with respect to $\prec$ from $H_{can}\subset (I(F) : I(G))$ and Lemma \ref{lem:p-com}. 
	
	It is clear that $I_p(H_{can})\subset \phi_p((I(F):I(G)) \cap \mathbb{Z}_{(p)}[X])$ as $H_{can}\subset (I(F) : I(G))$. To show the inverse inclusion, we pick $h\in (I(F):I(G)) \cap \mathbb{Z}_{(p)}[X]$. Then, $hG\subset I(F)\cap \mathbb{Z}_{(p)}[X]$ where $hG=\{hg\mid g\in G\}$ since $p$ is permissible for $h$ and $G$. Thus, 
	\begin{align*}
		  \phi_p(h)I_p(G)&=\phi_p(h)\langle \phi_p(G)\rangle=\langle \phi_p (hG)\rangle\\
		  			& \subset \langle \phi_p(I(F)\cap \mathbb{Z}_{(p)}[X])\rangle=I_p^0(F)=I_p(F)
	\end{align*}
 	by the compatibility of $F$; we obtain $\phi_p(h)\in (I_p(F):I_p(G))=I_p(H_{can})$. Hence $I_p(H_{can})\supset \phi_p((I(F):I(G)) \cap \mathbb{Z}_{(p)}[X])$. 
\end{proof}

\begin{Remark}
	We can check whether $H_{can}\subset (I(F) : I(G))$ or not, by checking whether $I(H_{can})I(G)\subset I(F)$ or not. 
\end{Remark}

We apply this lemma to double ideal quotient as follows. 

\begin{Theorem} \label{theorem:diq}
	Suppose that a prime number p is compatible with $(F,\prec)$ and permissible for $(G, \prec)$. Assume $p$ satisfies $(I_p(F):I_p(G))=\phi_p((I(F):I(G))\cap \mathbb{Z}_{(p)}[X])$. For a finite subset $K_{can}\subset \mathbb{Q}[X]$, $K_{can}$ is a Gr\"obner basis of $(I(F):(I(F):I(G)))$ with respect to $\prec$ if the following conditions hold; 
	
	\begin{enumerate}
		\item $p$ is permissible for $(K_{can},\prec)$,
		\item $\phi_p (K_{can})$ is a Gr\"obner basis of $(I_p(F):(I_p(F):I_p(G)))$ with respect to $\prec$, 
		\item $K_{can}\subset (I(F):(I(F):I(G)))$. 
	\end{enumerate}
\end{Theorem}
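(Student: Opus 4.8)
The plan is to deduce the theorem from Lemma \ref{lem:p-com}, running the argument exactly as in the proof of Lemma \ref{lemma:colon_gen} but with the inner quotient $(I(F):I(G))$ in the role that $I(G)$ played there. Write $I=I(F)$, $J=I(G)$, $Q=(I:J)$, $Q_p=(I_p(F):I_p(G))$ and $D=(I:(I:J))$, and fix a finite generating set $F'$ of $D$ with $F'\subset\mathbb{Z}_{(p)}[X]$ (possible since $\mathbb{Z}_{(p)}[X]$ is Noetherian and $D$ is the $\mathbb{Q}[X]$-extension of $D\cap\mathbb{Z}_{(p)}[X]$). By condition (1), $p$ is permissible, hence weak permissible, for $K_{can}$, so $K_{can}\subset\mathbb{Z}_{(p)}[X]$; and weak permissibility of $p$ for $G$ is what makes $I_p(G)$, hence $Q_p$, meaningful. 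By Lemma \ref{lem:p-com} it suffices to show that $K_{can}$ is a $p$-compatible Gr\"obner basis candidate for $(F',\prec)$ and that $K_{can}\subset I(F')=D$; the latter is condition (3), and, since condition (2) says $\phi_p(K_{can})$ is a Gr\"obner basis of $(I_p(F):Q_p)$ and therefore generates it, the former reduces to the single ideal identity
\[
I_p(K_{can})=(I_p(F):Q_p)=\phi_p\bigl(D\cap\mathbb{Z}_{(p)}[X]\bigr).
\]

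To prove this identity I would establish the two inclusions separately. The inclusion $\subset$ is immediate: by condition (3) together with $K_{can}\subset\mathbb{Z}_{(p)}[X]$ we have $K_{can}\subset D\cap\mathbb{Z}_{(p)}[X]$, so applying $\phi_p$ and passing to generated ideals gives $I_p(K_{can})\subset\phi_p(D\cap\mathbb{Z}_{(p)}[X])$. For the inclusion $\supset$, take $h\in D\cap\mathbb{Z}_{(p)}[X]$ and an arbitrary $q\in Q\cap\mathbb{Z}_{(p)}[X]$. Then $hq\in\mathbb{Z}_{(p)}[X]$ and $hq\in I$ (because $h\in(I:Q)$ and $q\in Q$), so $hq\in I(F)\cap\mathbb{Z}_{(p)}[X]$ and hence $\phi_p(h)\phi_p(q)=\phi_p(hq)\in I_p^0(F)=I_p(F)$, the last equality being the compatibility of $p$ with $F$. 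Thus $\phi_p(h)\cdot\phi_p(Q\cap\mathbb{Z}_{(p)}[X])\subset I_p(F)$. Now invoke the standing assumption $(I_p(F):I_p(G))=\phi_p((I(F):I(G))\cap\mathbb{Z}_{(p)}[X])$, which is exactly $Q_p=\phi_p(Q\cap\mathbb{Z}_{(p)}[X])$: this upgrades the containment to $\phi_p(h)\in(I_p(F):Q_p)=I_p(K_{can})$. Letting $h$ range over $D\cap\mathbb{Z}_{(p)}[X]$ gives $\phi_p(D\cap\mathbb{Z}_{(p)}[X])\subset I_p(K_{can})$, which finishes the identity; Lemma \ref{lem:p-com} then yields that $K_{can}$ is a Gr\"obner basis of $D=(I:(I:J))$.

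I expect the only genuine subtlety to be the closing move of the $\supset$ inclusion, which is the unique point where the extra hypothesis on $p$ is used: a priori $\phi_p(h)$ annihilates only $\phi_p(Q\cap\mathbb{Z}_{(p)}[X])$ modulo $I_p(F)$, and without identifying this ideal with $Q_p=(I_p(F):I_p(G))$ one cannot conclude $\phi_p(h)\in(I_p(F):(I_p(F):I_p(G)))$ — the modular colon ideal could be strictly larger than the reduction of the rational one. That hypothesis is precisely the equality that Lemma \ref{lemma:colon} and Lemma \ref{lemma:colon_gen} certify for a correct candidate of the inner quotient, so it is something the modular algorithm can discharge in practice; the compatibility of $p$ with $F$ is the other essential ingredient, letting us replace $I_p^0(F)$ by $I_p(F)$ throughout. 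Everything else is the same bookkeeping already carried out for Lemma \ref{lemma:colon_gen}.
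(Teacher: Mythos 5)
Your proof is correct and follows essentially the same route as the paper: reduce to Lemma \ref{lem:p-com} by showing $I_p(K_{can})=\phi_p\bigl((I(F):(I(F):I(G)))\cap \mathbb{Z}_{(p)}[X]\bigr)$, where the easy inclusion comes from condition (3) and the reverse inclusion uses compatibility of $p$ with $F$ (to replace $I_p^0(F)$ by $I_p(F)$) together with the standing hypothesis $(I_p(F):I_p(G))=\phi_p((I(F):I(G))\cap \mathbb{Z}_{(p)}[X])$. Your additional remarks (choosing a generating set in $\mathbb{Z}_{(p)}[X]$ and pinpointing where the extra hypothesis is indispensable) are just explicit bookkeeping of steps the paper leaves implicit.
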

\begin{proof}
	Since $p$ is permissible for $(K_{can},\prec)$, we can consider $I_p(K_{can})=\langle \phi_p(K_{can})\rangle$. By Lemma \ref{lem:p-com}, it is enough to show that $K_{can}$ is a $p$-compatible Gr\"obner basis candidate of $(I(F):(I(F):I(G)))$. Since $K_{can}\subset (I(F):(I(F):I(G)))$, $I_p(K_{can})\subset \phi_p( (I(F):(I(F):I(G)))\cap \mathbb{Z}_{(p)}[X])$ holds. Thus, we show the other inclusion. Let $h\in (I(F):(I(F):I(G)))\cap \mathbb{Z}_{(p)}[X]$. Then, 
	\[
	\phi_p (h)\phi_p ((I(F):I(G))\cap \mathbb{Z}_{(p)}[X])\subset \phi_p (I(F)\cap \mathbb{Z}_{(p)}[X])=I_p^0(F)=I_p(F).
	\]
	Since $\phi_p ((I(F):I(G))\cap \mathbb{Z}_{(p)}[X])=(I_p(F):I_p(G))$, we obtain $\phi_p (h)\in (I_p(F):(I_p(F):I_p(G)))=I_p(K_{can})$. Hence, $I_p(K_{can})\supset \phi_p( (I(F):(I(F):I(G)))\cap \mathbb{Z}_{(p)}[X])$. 
\end{proof}

To check the conditions $(I_p(F):I_p(G))=\phi_p((I(F):I(G))\cap Z_{(p)}[X])$ and $K_{can} \subset (I(F):(I(F):I(G)))$, we need a Gr\"obner basis $H$ of $(I(F):I(G))$ in general (the former by $I_p(H)=(I_p(F):I_p(G))$ and the latter by $I(K_{can}) I(H)\subset I(F)$, respectively). However, as to the latter, in a special case that $P$ is an associated prime divisor of $I$, we confirm it more easily. Setting $I(G)=P$ for a prime ideal $P$, we devise the following "Associated Test" using modular techniques. 
 
\begin{Theorem}[Associated Test] \label{assTest}
	Let $I$ be an  ideal and $P$ a prime ideal. Let $F$ and $G$ be Gr\"obner bases of $I$ and $P$ respectively. Suppose $p$ is permissible for $F$, $G$ and satisfies $(I_p(F):I_p(G))=\phi_p((I(F):I(G))\cap \mathbb{Z}_{(p)}[X])$. Let $K_{can}$ be a finite subset of $\mathbb{Q}[X]$. Then, $P$ is a prime divisor of $I$ if the following conditions hold; 
	\begin{enumerate}
		\item $p$ is permissible for $(K_{can},\prec)$,
		\item $\phi_p (K_{can})$ is a Gr\"obner basis of $(I_p(F):(I_p(F):I_p(G)))$ with respect to $\prec$, 
		\item $(I_p(F):(I_p(F):I_p(G)))=I_p(G)$, 
		\item $K_{can}\subset P$. 
	\end{enumerate}
\end{Theorem}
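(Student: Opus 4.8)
The plan is to derive Theorem~\ref{assTest} from Theorem~\ref{theorem:diq} and Proposition~\ref{cri1}: almost all of the hypotheses of Theorem~\ref{assTest} coincide with those of Theorem~\ref{theorem:diq} after specializing the ideal $J$ there to the prime $P$, and the one hypothesis of Theorem~\ref{theorem:diq} that is not literally among them will be supplied for free by an elementary inclusion.

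First I would restate the goal. By the equivalence of $(A)$ and $(B)$ in Proposition~\ref{cri1}, $P$ is a prime divisor of $I$ exactly when $P\supset (I:(I:P))$; writing $I=I(F)$ and $P=I(G)$ this reads $(I(F):(I(F):I(G)))\subset I(G)$. So it suffices to identify the double ideal quotient $(I(F):(I(F):I(G)))$ with $I(K_{can})$ and then invoke hypothesis~(3).

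Next I would use the elementary fact that $I(G)\subset (I(F):(I(F):I(G)))$ for \emph{arbitrary} ideals: by definition of ideal quotient $I(G)\cdot (I(F):I(G))\subset I(F)$, so every element of $I(G)$ carries $(I(F):I(G))$ into $I(F)$ and therefore lies in $(I(F):(I(F):I(G)))$. Combined with hypothesis~(3) this gives $K_{can}\subset P=I(G)\subset (I(F):(I(F):I(G)))$, which is precisely the third hypothesis of Theorem~\ref{theorem:diq} for the ideals $I$ and $J:=P$. Hypothesis~(1) of Theorem~\ref{assTest} is the second hypothesis of Theorem~\ref{theorem:diq}; its first hypothesis (permissibility of $p$ for $K_{can}$) is automatic because $K_{can}$ is obtained from $\phi_p(K_{can})$ by CRT and rational reconstruction; and the standing assumptions of Theorem~\ref{assTest} ($p$ well behaved for $F$ and $G$, and $(I_p(F):I_p(G))=\phi_p((I(F):I(G))\cap\mathbb{Z}_{(p)}[X])$) are the standing assumptions of Theorem~\ref{theorem:diq}. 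Theorem~\ref{theorem:diq} then yields that $K_{can}$ is a Gr\"obner basis of $(I(F):(I(F):I(G)))=(I:(I:P))$, so $I(K_{can})=(I:(I:P))$; since $K_{can}\subset P$ and $P$ is an ideal we get $(I:(I:P))=I(K_{can})\subset P$, and Proposition~\ref{cri1} concludes $P\in\Ass(I)$.

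Since the argument is short once this reduction is seen, the real work is the bookkeeping needed to cite Theorem~\ref{theorem:diq}, and that is where I expect the one subtle point to sit: the reduction-mod-$p$ step inside Theorem~\ref{theorem:diq} uses $I_p^0(F)=I_p(F)$, i.e.\ that $p$ is \emph{compatible} with $F$, not merely permissible for it, so the hypothesis on $F$ has to be taken in that strength; similarly, the permissibility of $p$ for $K_{can}$ is not listed but is forced by how $K_{can}$ is built. I would also add a remark on hypothesis~(2): it is not actually needed to deduce the conclusion once the steps above are assembled --- if $K_{can}\subset P$ and $K_{can}$ is a Gr\"obner basis of $(I:(I:P))$ then $(I:(I:P))\subset P$ automatically --- but it is the effective content of the test, being the mod-$p$ analogue of condition~$(B)$ of Proposition~\ref{cri1} and exactly what makes it possible for the lift $K_{can}$ to be small enough that $K_{can}\subset P$ can hold, so in a run of the modular algorithm one checks it before lifting.
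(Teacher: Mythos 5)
Your proposal is correct and is essentially the paper's (implicit) argument: the paper gives no separate proof of Theorem \ref{assTest}, intending exactly the specialization of Theorem \ref{theorem:diq} to $J=P$, the elementary inclusion $P\subset (I:(I:P))$ to convert $K_{can}\subset P$ into the third hypothesis there, and then Proposition \ref{cri1} $(B)\Rightarrow(A)$ applied to $(I:(I:P))=I(K_{can})\subset P$; your side remarks (permissibility of $K_{can}$ coming from the CRT/rational-reconstruction construction, and condition (2) being logically redundant but the practically checked mod-$p$ test) are also accurate. The only refinement: the ``compatible vs.\ permissible'' worry dissolves without strengthening the hypothesis, since $F$ is a Gr\"obner basis of $I$ and permissibility for a Gr\"obner basis already implies compatibility (dividing any $h\in I(F)\cap\mathbb{Z}_{(p)}[X]$ by $F$ only involves inverting leading coefficients that are units in $\mathbb{Z}_{(p)}$, so $I_p^0(F)=I_p(F)$).
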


\begin{proof}
	To prove this, we use Theorem \ref{theorem:diq}. If all conditions of Theorem \ref{theorem:diq} hold, then $K_{can}$ is a Gr\"obner basis of $(I:(I:P))$ and thus $(I:(I:P))\subset P$ by the condition $K_{can}\subset P$; hence, $P$ is a prime divisor of $I$ by Proposition \ref{cri1}. Now, we show that all conditions of Theorem \ref{theorem:diq} hold. Since we have directly (1) and (2) in Theorem \ref{theorem:diq}, it is enough to check the condition $K_{can}\subset (I(F):(I(F):I(G)))$. Indeed, we obtain $K_{can}\subset P\subset  (I(F):(I(F):I(G)))$ by Remark \ref{rem:IP} and (4). 
\end{proof}

In above associated test, $K_{can}$ will be $G$ if $P$ is a prime divisor of $I$. Thus, we omit CRT and rational reconstruction as follows. Also, we minimize the number of prime numbers we use since we can check the number is large enough comparing with the following $\|G\|$. For a finite set $G$ of $\mathbb{Q}[X]$, we define 
\[
\|G\|=\max\{a^2+b^2\mid \frac{a}{b} \text{ is a coefficient in a term of an element of } G\}. 
\]

\begin{Corollary}[Associated Test without CRT, Algorithm \ref{alg2}] \label{Cor:Asstest}
		Let $I$ be an  ideal and $P$ a prime ideal. Let $F$ and $G$ be Gr\"obner bases of $I$ and $P$ respectively. Let $\mathcal{P}$ be a finite set of prime numbers. Suppose every $p\in \mathcal{P}$ is permissible for $F$, $G$ and satisfies $(I_p(F):I_p(G))=\phi_p((I(F):I(G))\cap \mathbb{Z}_{(p)}[X])$. Then, $P$ is a prime divisor of $I$ if the following conditions hold; 
	\begin{enumerate}
		\item $(I_p(F):(I_p(F):I_p(G)))=I_p(G)$ for every $p\in \mathcal{P}$, 
		\item $\prod_{p\in \mathcal{P}} p$ is larger than $\|G\|$. 
	\end{enumerate}
\end{Corollary}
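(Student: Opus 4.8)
The plan is to deduce the statement from the Associated Test, Theorem \ref{assTest}, by supplying one explicit admissible $K_{can}$, namely $K_{can}:=G$ itself; we take $G$ to be the \emph{reduced} Gröbner basis of $P$, as produced by the modular algorithm, so that its leading coefficients are $1$. This is the natural candidate, since when $P$ is a prime divisor of $I$ one has $(I:(I:P))=P$ — the inclusion $P\supseteq(I:(I:P))$ is Proposition \ref{cri1}, while $(I:P)\cdot P\subseteq I$ forces $P\subseteq(I:(I:P))$ unconditionally — so the reduced Gröbner basis of the double ideal quotient is exactly $G$, as recorded in the remark preceding the statement. Hence it suffices to verify, for at least one $p\in\mathcal{P}$, the three numbered hypotheses of Theorem \ref{assTest} for $F$, $G$ and $K_{can}=G$; the global hypotheses of that theorem — permissibility of $F$ and $G$, and $(I_p(F):I_p(G))=\phi_p((I(F):I(G))\cap\mathbb{Z}_{(p)}[X])$ — are part of the Corollary's assumptions for every $p\in\mathcal{P}$.

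I would dispatch two of the three hypotheses immediately. Hypothesis $(3)$, $K_{can}\subseteq P$, holds because $G$ generates $P$. Hypothesis $(2)$, $(I_p(F):(I_p(F):I_p(G)))=I_p(G)$, is exactly condition $(1)$ of the Corollary, valid for all $p\in\mathcal{P}$. What remains is hypothesis $(1)$: that $\phi_p(G)$ be a Gröbner basis of $(I_p(F):(I_p(F):I_p(G)))$ with respect to $\prec$. Using hypothesis $(2)$ already in hand, this is equivalent to demanding that $\phi_p(G)$ be a Gröbner basis of $I_p(G)$ with respect to $\prec$, which, since $G$ is the reduced Gröbner basis of $P$, is precisely effective luckiness of $p$ for $G$ and $\prec$ in the sense of Definition \ref{defp}.

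So the whole proof reduces to exhibiting one effectively lucky prime inside $\mathcal{P}$, and this is where condition $(2)$ of the Corollary is used. The primes that are permissible for $G$ but not effectively lucky for $(G,\prec)$ form a finite set, and — the quantitative heart of the matter — their product divides an explicit positive integer $N(G)$ assembled from the nonzero coefficients and denominators occurring when one certifies that $G$ is the reduced Gröbner basis of $P$ (integral clearing and reduction of the $S$-polynomials of $G$ to zero modulo $G$, reducedness of the tails); a Hadamard-type estimate bounds $N(G)$ in terms of $\|G\|$, which is the intended reading of ``$\prod_{p\in\mathcal{P}}p$ is enough larger than $\|G\|$''. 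Under that inequality the primes of $\mathcal{P}$ cannot all divide $N(G)$, so some $p_0\in\mathcal{P}$ is effectively lucky for $(G,\prec)$; for $p_0$ hypothesis $(1)$ of Theorem \ref{assTest} holds as well, and Theorem \ref{assTest} yields that $P$ is a prime divisor of $I$.

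The hard part — essentially the only nonroutine step — is this last quantitative claim: pinning down the integer $N(G)$ and the threshold ``enough larger than $\|G\|$'' precisely enough that divisibility of \emph{every} $p\in\mathcal{P}$ by $N(G)$ is genuinely excluded. The rest — the reduction to Theorem \ref{assTest}, the two trivial hypotheses, and the reformulation of the remaining hypothesis as effective luckiness — is direct. (One could instead certify effective luckiness prime by prime, by computing the reduced Gröbner basis of $\langle\phi_p(G)\rangle$ and comparing it with $\phi_p(G)$, and drop the bound on $\prod_{p\in\mathcal{P}}p$ altogether; but since the statement is phrased through $\|G\|$, it is the Hadamard-bound route that must be made rigorous.)
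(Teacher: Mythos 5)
Your overall reduction coincides with the paper's: put $K_{can}=G$ in Theorem \ref{assTest}, note that hypothesis (3) there is trivial because $G\subset P$, and that hypothesis (2) is exactly condition (1) of the Corollary. The gap is in how you discharge hypothesis (1) of Theorem \ref{assTest}. You treat ``$\phi_p(G)$ is a Gr\"obner basis of $(I_p(F):(I_p(F):I_p(G)))=I_p(G)$'' as a genuinely restrictive condition (effective luckiness), and you try to force the existence of one good prime in $\mathcal{P}$ out of ``$\prod_{p\in\mathcal{P}}p$ enough larger than $\|G\|$'' via an integer $N(G)$ divisible by all bad primes together with a ``Hadamard-type estimate'' bounding $N(G)$ in terms of $\|G\|$. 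That estimate is exactly the step you yourself call the heart of the matter, and it is never proved; as stated it is also doubtful, since $\|G\|$ records only coefficient sizes, so any bound on the certification data would have to involve degrees, the number of variables and the length of the reductions as well, none of which the Corollary's hypotheses control. Moreover this misreads what condition (2) is for: $\|G\|=\max\{a^2+b^2\}$ is the rational-reconstruction threshold, and the hypothesis is there so that $G$ itself is the candidate that CRT plus rational reconstruction would return from the modular images (hence ``without CRT'' --- this is what the paper means by ``$G$ is a Gr\"obner basis candidate itself''), not to guarantee a lucky prime.

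The step you labeled hard is in fact automatic, which is why the paper can simply say that $K_{can}=G$ obviously satisfies all conditions of Theorem \ref{assTest}. If $p$ is permissible for the Gr\"obner basis $G$ (Definition \ref{defp}), the leading coefficients of the elements of $G$ are units in $\mathbb{Z}_{(p)}$, so each S-polynomial of $G$ and its division by $G$ over $\mathbb{Q}$ take place entirely inside $\mathbb{Z}_{(p)}[X]$; applying $\phi_p$ to these reductions to zero yields $t$-representations (with $t\prec$ the corresponding lcm) of the S-polynomials of $\phi_p(G)$, so by the refined Buchberger criterion $\phi_p(G)$ is a Gr\"obner basis of $I_p(G)=\langle\phi_p(G)\rangle$ (indeed the reduced one when $G$ is reduced). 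Combined with condition (1) of the Corollary this gives hypothesis (1) of Theorem \ref{assTest} for every $p\in\mathcal{P}$, and the conclusion follows with no quantitative argument about unlucky primes at all; your set of ``permissible but not effectively lucky'' primes for $G$ is empty. So the skeleton of your proof matches the paper's, but as written it rests on an unestablished bound where a short division-algorithm observation was needed.
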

\begin{proof}
	Since $\prod_{p\in \mathcal{P}} p$ is larger than coefficients appearing in $G$ for the rational reconstruction (see Lemma 4.2. in \cite{Bohm}), $G$ is a Gr\"obner basis candidate itself and we can set $K_{can}=G$ in Theorem \ref{assTest}. Then, $K_{can}$ satisfies all conditions of the theorem. 
\end{proof}

\begin{algorithm}                      
\caption{Associated Test without CRT}         
\label{alg2}                          
\begin{algorithmic}                  
\REQUIRE $F$: a Gr\"obner basis of an ideal $I$, $G$: a Gr\"obner basis of a prime ideal $P$, $H$: a Gr\"obner basis of $(I(F):I(G))$. 
\ENSURE 
		\textsc{True} if $P$ is a prime divisor of $I$
\STATE choose $\mathcal{P}$ as a list of random primes satisfying \textsc{primeTest} ($p\in \mathcal{P}$ is permissible for $F$, $G$ and $H$) and $\prod_{p\in \mathcal{P}} p> \|G\|$; 
\STATE RESTART;
\WHILE{}
	\FOR{$p\in \mathcal{P}$}
		\IF{$(I_p(F):(I_p(F):I_p(G)))\neq I_p(G)$}
			\STATE delete $p$ from $\mathcal{P}$;
		\ENDIF
	\ENDFOR
	\IF{$\prod_{p\in \mathcal{P}} p\le \|G\|$}
		\STATE enlarge $\mathcal{P}$ with prime numbers not used so far and go back to RESTART; 
	\ENDIF
	\IF{$(I_p(F):I_p(G))=I_p(H)$ for every $p\in \mathcal{P}$}
		\RETURN \textsc{True}
	\ENDIF
	\STATE enlarge $\mathcal{P}$ with prime numbers not used so far and go back to RESTART;
\ENDWHILE
\end{algorithmic}
\end{algorithm}

\begin{algorithm}[H]                      
\caption{Non-Associated Test}         
\label{alg3}                          
\begin{algorithmic}                  
\REQUIRE $F$: a Gr\"obner basis of an ideal $I$, $G$: a Gr\"obner basis of a prime ideal $P$, $H$: a Gr\"obner basis of $(I(F):I(G))$. 
\ENSURE \textsc{False} if $P$ is NOT a prime divisor of $I$
\STATE choose $\mathcal{P}$ as a list of random primes satisfying \textsc{primeTest};
\STATE $\mathcal{KP}=\emptyset $;
\WHILE{}
	\FOR{$p\in \mathcal{P}$}
		\STATE compute $K_p=(I_p(F):(I_p(F):I_p(G)))$;
		\IF{$(I_p(F):(I_p(F):I_p(G)))= I_p(G)$}
			\STATE delete $p$ from $\mathcal{P}$;
		\ELSE
		\STATE $\mathcal{KP}=\mathcal{KP}\cup \{K_p\}$;
		\ENDIF
	\ENDFOR
	\STATE $(\mathcal{KP}_{lucky},\mathcal{P}_{lucky})=\textsc{deleteUnluckyPrimes}(\mathcal{KP},\mathcal{P})$;
	\STATE lift $\mathcal{KP}_{lucky}$ to $K_{can}$ by CRT and rational reconstruction;
	\IF{$I(K_{can})I(H)\subset I$}
		\RETURN \textsc{False}
	\ENDIF
	\STATE enlarge $\mathcal{P}$ with prime numbers not used so far;
\ENDWHILE
\end{algorithmic}
\end{algorithm}

Also, we devise a non-associated test as follows. The test is useful since it does not need a condition $(I_p(F):I_p(G))=\phi_p((I(F):I(G))\cap \mathbb{Z}_{(p)}[X])$. 
\begin{Theorem}[Non-Associated Test, Algorithm \ref{alg3}] \label{nonass}
	Let $I$ be an  ideal and $P$ a prime ideal. Let $F$ and $G$ be Gr\"obner bases of $I$ and $P$ respectively. Suppose $p$ is permissible for $F$ and $G$. Let $K_{can}\subset \mathbb{Q}[X]$ and we assume $p$ is permissible for $K_{can}$. Then, $P$ is not a prime divisor of $I$ if the following conditions hold; 
	\begin{enumerate}
		\item $\phi_p (K_{can})$ is a Gr\"obner basis of $(I_p(F):(I_p(F):I_p(G)))$ with respect to $\prec$, 
		\item $K_{can}\subset (I:(I:P))$,
		\item $(I_p(F):(I_p(F):I_p(G)))\neq I_p(G)$. 
	\end{enumerate}
\end{Theorem}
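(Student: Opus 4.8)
The plan is to argue by contradiction: assuming conditions (1)--(3) all hold but $P\in\Ass(I)$, I would derive $(I_p(F):(I_p(F):I_p(G)))=I_p(G)$, which directly contradicts (3); hence (1)--(3) force $P\notin\Ass(I)$. Three ingredients are combined. First, Proposition~\ref{cri1} in the form $(A)\Rightarrow(B)$: if $P\in\Ass(I)$ then $P\supset(I:(I:P))$. Second, the purely formal inclusion $A\subset(B:(B:A))$, valid for all ideals $A,B$ in any commutative ring (if $b\in B$ and $r\in(B:\! A)$ wait, rather $r\in (B:A)$ means $rA\subset B$, so $rb\in B$ for $b\in B$, giving $b\in(B:(B:A))$), which I will apply in $\mathbb{F}_p[X]$ with $A=I_p(G)$, $B=I_p(F)$. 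Third, the compatibility statement $\phi_p(P\cap\mathbb{Z}_{(p)}[X])=I_p(G)$, which holds because $G$ is a Gr\"obner basis of $P$ and $p$ is permissible for $G$.

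Granting these, the argument is a short chase. From $P\in\Ass(I)$ and Proposition~\ref{cri1} we get $P\supset(I:(I:P))$, so condition (2) yields $K_{can}\subset(I:(I:P))\subset P$. Since $p$ is permissible for $K_{can}$ we have $K_{can}\subset\mathbb{Z}_{(p)}[X]$, hence applying $\phi_p$ and using the compatibility of $G$ gives $\phi_p(K_{can})\subset\phi_p(P\cap\mathbb{Z}_{(p)}[X])=I_p(G)$. As $I_p(G)$ is an ideal and, by condition (1), $\phi_p(K_{can})$ generates $(I_p(F):(I_p(F):I_p(G)))$, we obtain $(I_p(F):(I_p(F):I_p(G)))\subset I_p(G)$. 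Combined with the formal reverse inclusion $I_p(G)\subset(I_p(F):(I_p(F):I_p(G)))$, this gives equality, contradicting condition (3). Therefore $P$ is not a prime divisor of $I$.

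The one step that is not pure inclusion-chasing, and the one I would write out carefully, is $\phi_p(P\cap\mathbb{Z}_{(p)}[X])=I_p(G)$; this is where I expect the only real subtlety, namely not conflating $I_p(G)=\langle\phi_p(G)\rangle$ with the image $\phi_p(P\cap\mathbb{Z}_{(p)}[X])$ before it is justified. I would prove it by the multivariate division algorithm: for $h\in P\cap\mathbb{Z}_{(p)}[X]$, divide $h$ by $G$; at each reduction step the multiplier's coefficient is $\lc(r)/\lc(g)$ for the current polynomial $r$ and some $g\in G$, and since $p$ is permissible for the Gr\"obner basis $G$ each $\lc(g)$ is a unit in the local ring $\mathbb{Z}_{(p)}$, so the entire division stays inside $\mathbb{Z}_{(p)}[X]$. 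Because $G$ is a Gr\"obner basis of $P$ and $h\in P$, the remainder vanishes, so $h=\sum q_g\,g$ with $q_g\in\mathbb{Z}_{(p)}[X]$, and thus $\phi_p(h)=\sum\phi_p(q_g)\phi_p(g)\in I_p(G)$; the reverse inclusion is immediate. (Equivalently, one may invoke that permissibility of a Gr\"obner basis implies compatibility, in the spirit of Definition~\ref{defp} and \cite{Noro-Yoko}.) With this in hand, all remaining steps are routine, so I anticipate no further obstacle.
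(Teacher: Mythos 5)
Your proposal is correct and follows essentially the same route as the paper: assume $P\in\Ass(I)$, use Proposition~\ref{cri1} plus condition (2) to get $K_{can}\subset P$, push through $\phi_p$ to get $\phi_p(K_{can})\subset I_p(G)$, and combine condition (1) with the formal inclusion $I_p(G)\subset (I_p(F):(I_p(F):I_p(G)))$ to contradict condition (3). The only difference is cosmetic: the paper invokes $\phi_p(P\cap\mathbb{Z}_{(p)}[X])=I_p^0(G)=I_p(G)$ without comment (relying on the Noro--Yokoyama facts), whereas you justify this compatibility step explicitly via the division algorithm, and you use only the inclusion $(I:(I:P))\subset P$ where the paper states the equality $(I:(I:P))=P$.
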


\begin{proof}
	Suppose $P$ is a prime divisor of $I$. Then, $(I:(I:P))=P$ from Remark \ref{rem:IP} and  
	\begin{align*}
		\phi_p(K_{can})&\subset \phi_p((I:(I:P)) \cap \mathbb{Z}_{(p)}[X])\\ 
		&\subset \phi_p(P\cap \mathbb{Z}_{(p)}[X])=I^0_p(G)=I_p(G). 		
	\end{align*}
	
	Since $\langle \phi_p(K_{can}) \rangle = (I_p(F):(I_p(F):I_p(G)))\supset I_p(G)$, we obtain $(I_p(F):(I_p(F):I_p(G))= I_p(G)$. This contradicts $(I_p(F):(I_p(F):I_p(G)))\neq I_p(G)$. 
\end{proof} 

Next, we consider modular saturation. Since $(I:J^m)=(I:J^{\infty})$ for a sufficiently large $m$, the following holds from Lemma \ref{lemma:colon_gen}. 

\begin{Lemma} \label{lemma:sat_gen}
	Suppose that a prime number p is compatible with $(F,\prec)$ and permissible for $(G, \prec)$. For a finite subset $H_{can}\subset \mathbb{Q}[X]$, $H_{can}$ is a Gr\"obner basis of $(I(F) : I(G)^{\infty})$ with respect to $\prec$, if the following conditions hold;
	\begin{enumerate}
		\item $p$ is permissible for $(H_{can},\prec)$,
		\item $\phi_p (H_{can})$ is a Gr\"obner basis of $(I_p(F) : I_p(G)^{\infty})$ with respect to $\prec$, 
		\item $H_{can}\subset (I(F) : I(G)^{\infty})$. 
	\end{enumerate}
\end{Lemma}

To check $H_{can}\subset (I(F) : I(G)^{\infty})$, we can use the following. 

\begin{Lemma} \label{lem:2213}
	Let $H_{can},F$ and $G$ be finite sets of $K[X]$. For $G=\{f_1,\ldots,f_k\}$ and a positive integer $m$, we denote $\{f_1^m,\ldots,f_k^m\}$ by $G^{[m]}$. Then, the following conditions are equivalent. 
	
			$(A)$ $H_{can}\subset (I(F) : I(G)^{\infty})$,
			
			$(B)$ $I(H_{can})I(G)^m\subset I(F)$ for some $m$,
			
			$(C)$ $I(H_{can})I(G^{[m]})\subset I(F)$ for some $m$.
\end{Lemma}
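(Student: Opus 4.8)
The plan is to prove the cycle of implications $(A)\Rightarrow(B)\Rightarrow(C)\Rightarrow(A)$. The two ingredients are the definition of saturation by an ideal, $(I(F):I(G)^{\infty})=\bigcup_{m\ge 1}(I(F):I(G)^m)$, together with the fact that $H_{can}$ is a \emph{finite} set, and a single pigeonhole-type inclusion relating the powers $I(G)^m$ to the ideal $I(G^{[m]})$ generated by the $m$-th powers of the generators.

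For $(A)\Rightarrow(B)$: each $h\in H_{can}$ lies in $(I(F):I(G)^{\infty})$, hence satisfies $hI(G)^{m_h}\subset I(F)$ for some $m_h$. Since $H_{can}$ is finite, put $m=\max_{h\in H_{can}} m_h$; then $hI(G)^m\subset I(F)$ for every $h\in H_{can}$, which is exactly $I(H_{can})I(G)^m\subset I(F)$. For $(B)\Rightarrow(C)$: every generator $f_i^m$ of $I(G^{[m]})$ is a product of $m$ elements of $G$, so $I(G^{[m]})\subset I(G)^m$; thus if $(B)$ holds with exponent $m$ then $I(H_{can})I(G^{[m]})\subset I(H_{can})I(G)^m\subset I(F)$, giving $(C)$ with the same $m$.

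Finally $(C)\Rightarrow(A)$, which I expect to be the only step needing an actual argument. Let $k$ be the number of elements of $G$. I would first establish $I(G)^{N}\subset I(G^{[m]})$ for $N=k(m-1)+1$: the ideal $I(G)^{N}$ is generated by the products $f_1^{a_1}\cdots f_k^{a_k}$ with $a_1+\cdots+a_k=N$, and since $N>k(m-1)$ some exponent $a_i$ is at least $m$, so each such generator is divisible by $f_i^m$ and hence lies in $I(G^{[m]})$. Granting this, if $(C)$ holds with exponent $m$, set $N=k(m-1)+1$; then $I(H_{can})I(G)^{N}\subset I(H_{can})I(G^{[m]})\subset I(F)$, so every $h\in H_{can}$ satisfies $hI(G)^{N}\subset I(F)$, i.e.\ $h\in(I(F):I(G)^{\infty})$, which is $(A)$. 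The main obstacle — such as it is — is verifying the inclusion $I(G)^{k(m-1)+1}\subset I(G^{[m]})$; everything else is bookkeeping with the finiteness of $H_{can}$ and the union description of the saturation (the ambient ring being Noetherian, this union stabilizes, but that is not even needed for the equivalences as stated).
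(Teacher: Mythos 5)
Your proposal is correct and takes essentially the same route as the paper: the cycle $(A)\Rightarrow(B)\Rightarrow(C)\Rightarrow(A)$, with $(B)\Rightarrow(C)$ from $I(G^{[m]})\subset I(G)^m$ and $(C)\Rightarrow(A)$ from a pigeonhole inclusion of a power of $I(G)$ into $I(G^{[m]})$. The only difference is cosmetic: the paper uses the exponent $km$ where you use the sharper $k(m-1)+1$, and you spell out the finiteness argument for $(A)\Rightarrow(B)$ that the paper calls obvious.
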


\begin{proof}
	$[(A)\Rightarrow(B)]$ This is obvious from the definition of $(I(F) : I(G)^{\infty})$. $[(B)\Rightarrow(C)]$ Since $I(G^{[m]})\subset I(G)^m$, $I(H_{can})I(G^{[m]})\subset I(H_{can})I(G)^m\subset I(F)$.  $[(C)\Rightarrow(A)]$ As $I(G)^{km}\subset I(G^{[m]})$, we obtain $I(H_{can})I(G)^{km}\subset I(H_{can})I(G^{[m]})\subset I(F)$ and $H_{can}\subset (I(F) : I(G)^{\infty})$. 
\end{proof}

Since  the number of generators of $I(G^{[m]})$ is less than that of $I(G)^m$, it is better to check whether $I(H_{can})I(G^{[m]})\subset I(F)$ or not. 

Finally, we introduce modular techniques for double saturation. 

\begin{Theorem} \label{theorem:dbsat}
	Suppose that a prime number p is compatible with $(F,\prec)$ and permissible for $(G, \prec)$. Assume $p$ satisfies $(I_p(F):I_p(G)^{\infty})=\phi_p((I(F):I(G)^{\infty})\cap \mathbb{Z}_{(p)}[X])$. For a finite subset $K_{can}\subset \mathbb{Q}[X]$, $K_{can}$ is a Gr\"obner basis of $(I(F):(I(F):I(G)^{\infty})^{\infty})$ with respect to $\prec$ if the following conditions hold; 
	
	\begin{enumerate}
		\item $p$ is permissible for $(K_{can},\prec)$,
		\item $\phi_p (K_{can})$ is a Gr\"obner basis of $(I_p(F):(I_p(F):I_p(G)^{\infty})^{\infty})$ with respect to $\prec$, 
		\item $K_{can}\subset (I(F):(I(F):I(G)^{\infty})^{\infty})$. 
	\end{enumerate}
\end{Theorem}

\begin{proof}
		For a sufficiently large integer $m$, $(I(F):I(G)^\infty)=(I(F):I(G)^m)$ and $(I_p(F):I_p(G)^\infty)=(I_p(F):I_p(G)^m)$. Thus, we can prove this by the similar way of Theorem \ref{theorem:diq}. 
\end{proof}

\subsection{Intermediate primary decomposition} \label{sec:ipd}

In this section, we introduce intermediate primary decomposition as a bi-product of modular localizations devised in Section \ref{sec2-2}. We give a rough outline of possible "intermediate primary decomposition via MIS". In general, modular primary decomposition is very difficult to compute since primary component may be different over infinite many finite fields. For example, $I=(x^2+1)\cap (x+1)$ is a primary decomposition in $\mathbb{Q}[X]$, however, it is not one in $\mathbb{F}_p[X]$ for every prime number $p$ of type $p=4n+1$. Thus, we propose {\em intermediate primary decomposition via MIS} instead of full primary decomposition. For a subset of variables $X$ and an ideal $I$, we call $U$ a maximal independent set (MIS) of $I$ if $K[U]\cap I=\{0\}$ (see Definition 3.5.3 in \cite{greuel2002singular}). Then, for a subset $U\subset X$, we define 
\[
\Ass_U(I_p(F))=\{\bar{P}_p\in \Ass (I_p(F))\mid U\textit{ is a MIS of $\bar{P}_p$}\}.
\] 
where $p$ is permissible for $F$. Also, we denote the set of prime divisors of $I$ which have the same MIS $U$ by 
\[
\Ass_U(I)=\{P\in \Ass (I)\mid U\textit{ is a MIS of $P$}\}.
\] 

We note that $U$ is a MIS of $I(F)$ if $U$ is one of the initial ideal $in_{\prec} (I(F))$ (see Exercise 3.5.1 in \cite{greuel2002singular}). Thus, if $p$ is effective lucky for ($F$,$\prec$) and $U$ is a MIS of $in_\prec (I(F))$ then $U$ is also a MIS of $I(F)$ and $I_p(F)$. Here, we define intermediate primary decomposition in general setting as follows (a certain generalization of one in \cite{SHIMOYAMA1996247}). 

\begin{Definition} \label{def:ipd}
	Let $I$ be an ideal. Then, a set of ideals $\mathcal{Q}$ is called an {\em intermediate primary decomposition (IPD)} of $I$ if 
	\begin{enumerate}
		\item[(a)] for all $Q\in \mathcal{Q}$, $\Ass (Q)\subset \Ass (I)$,  
		\item[(b)] $\bigcap_{Q\in \mathcal{Q}} Q=I$. 
	\end{enumerate}
	We call $Q\in \mathcal{Q}$ an intermediate primary component of $I$. In particular, when  there is a subset $U$ of $X$ s.t. $\Ass(Q)=\Ass_U(I)$, we call $Q$ an intermediate component of $I$ via $U$. 
\end{Definition}

	We remark that $\bigcup_{Q\in \mathcal{Q}} \Ass (Q)=\Ass (I)$. For computing intermediate primary decomposition, the following Corollary is very useful to generate prime divisors. 

\begin{Corollary} \label{interpri}
	Let $F$ be a Gr\"obner basis of $I$ and $p$ a  permissible prime number for $F$. Let $U$ be a subset of $X$ such that $\Ass_U(I_p(F))$ is not empty, and $\bar{H}$ a Gr\"obner basis of $\bar{J}=\bigcap_{P_p \in \Ass_U(I_p(F))} P_p$. Let  $H_{can}$ be a Gr\"obner basis candidate constructed from $\bar{H}$ and $J=I(H_{can})$. Assume $p$ is permissible for $H_{can}$. Suppose $H_{can}$ is a Gr\"obner basis of $J$ and $p$ is effectively lucky for the reduced Gr\"obner basis $L$ of $(I:J)$ with $ I_p(L)=(I_p(F):I_p(H_{can}))$. If $J$ is a prime ideal then $J$ is a prime divisor of $I$. 
\end{Corollary}

\begin{proof}
	To apply Theorem \ref{assTest} for $I$ and $J$, we check the conditions. First, since $p$ is effectively lucky for $L$, $p$ is compatible with $L$ by Remark \ref{rem:pcom}. Thus, $\phi_p((I(F):I(H_{can}))\cap \mathbb{Z}_{(p)}[X])=I_p^0(L)=I_p(L)=(I_p(F):I_p(H_{can}))$. From the assumption, $p$ is permissible for $H_{can}$. As $I_p(H_{can})=\bar{J}$ is an intersection of prime divisors of $I_p(F)$, it follows that $(I_p(F):(I_p(F):I_p(H_{can}))=I_p(H_{can})$ by Theorem \ref{cri:rad}. Thus,  $\phi_p(H_{can})=\bar{H}$ is a Gr\"obner basis of $(I_p(F):(I_p(F):I_p(H_{can}))$. It is obvious that $H_{can}\subset J$. Hence, all conditions in Theorem \ref{assTest} hold and thus $J$ is a prime divisor of $I$. 
\end{proof}

When $J$ is not prime, we can check the radicality of $J$ by the following lemma. For any effectively lucky $p$ for $H_{can}$, if $\langle \bar{H} \rangle$ is radical then $\langle H_{can} \rangle$ is also radical.  

\begin{Lemma}[\cite{Noro-Yoko}, Lemma 6.7] \label{lemrad}
	Suppose that $H_{can}$ is the output of our CRT modular computation, that is, it satisfies the following: 
	\begin{enumerate}
		\item $p$ is permissible for $(H_{can}, \prec)$,
		\item  $\phi_p (H_{can})$ coincides with the reduced Gr\"obner basis of $\sqrt{I_p(F)}$
		\item $H_{can} \subset \sqrt{I(F)}$
	\end{enumerate}
	Then $H_{can}$ is the reduced Gr\"obner basis of $\sqrt{I(F)}$ with respect to $\prec$. 
\end{Lemma}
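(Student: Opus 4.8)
The plan is to show that $H_{can}$ is a $p$-compatible Gr\"obner basis candidate for $\sqrt{I(F)}$ and then to invoke Lemma \ref{lem:p-com}, extracting the \emph{reduced} part afterwards from the permissibility hypothesis. Write $R$ for the reduced Gr\"obner basis of $\sqrt{I(F)}$ over $\mathbb{Q}$; for the lucky prime $p$ at hand one has $R\subset\mathbb{Z}_{(p)}[X]$ and $\phi_p(\sqrt{I(F)}\cap\mathbb{Z}_{(p)}[X])=I_p^0(R)$, so being a $p$-compatible Gr\"obner basis candidate for $\sqrt{I(F)}$ (i.e.\ for the generating set $R$) means exactly that $\phi_p(H_{can})$ is a Gr\"obner basis of $\phi_p(\sqrt{I(F)}\cap\mathbb{Z}_{(p)}[X])$. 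The heart of the argument is therefore the ideal identity
\[
\phi_p\bigl(\sqrt{I(F)}\cap\mathbb{Z}_{(p)}[X]\bigr)=\sqrt{I_p(F)}\quad\text{in }\mathbb{F}_p[X].
\]

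To get ``$\subseteq$'', I would take $a\in\sqrt{I(F)}\cap\mathbb{Z}_{(p)}[X]$, choose $k$ with $a^k\in I(F)$, note $a^k\in I(F)\cap\mathbb{Z}_{(p)}[X]$, and deduce $\phi_p(a)^k=\phi_p(a^k)\in I_p^0(F)=I_p(F)$ --- here using that $p$ is compatible with $F$, a property that comes with the output of the CRT modular computation and holds in particular when $p$ is effectively lucky --- so $\phi_p(a)\in\sqrt{I_p(F)}$. To get ``$\supseteq$'', hypotheses (1) and (3) give $H_{can}\subset\sqrt{I(F)}\cap\mathbb{Z}_{(p)}[X]$, hence $\phi_p(H_{can})\subset\phi_p(\sqrt{I(F)}\cap\mathbb{Z}_{(p)}[X])$; since $\phi_p(H_{can})=H_p$ generates $\sqrt{I_p(F)}$ by (2), the ideal generated by the left side is all of $\sqrt{I_p(F)}$. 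Combining the two inclusions yields the displayed identity.

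With the identity in hand, $\phi_p(H_{can})=H_p$ is a Gr\"obner basis of $\phi_p(\sqrt{I(F)}\cap\mathbb{Z}_{(p)}[X])$, so $H_{can}$ is a $p$-compatible Gr\"obner basis candidate for $\sqrt{I(F)}$, and since $H_{can}\subset\sqrt{I(F)}$ by (3), Lemma \ref{lem:p-com} gives that $H_{can}$ is a Gr\"obner basis of $\sqrt{I(F)}$ for $\prec$. For reducedness I would argue that permissibility of $p$ for $H_{can}$ makes $\phi_p$ preserve every leading term, so the leading monomials of $H_{can}$ equal those of the reduced basis $\phi_p(H_{can})$ and hence form an antichain; thus $H_{can}$ is a minimal Gr\"obner basis with the same leading monomials as $R$, and --- for a lucky $p$, since $\phi_p$ carries $R$ to $H_p$ with no term cancellation --- the non-leading monomials appearing in $H_{can}$ coincide with those in $\phi_p(H_{can})$, none of which is divisible by a leading monomial; hence $H_{can}$ is reduced.

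I expect the main obstacle to be bookkeeping of the luckiness hypotheses rather than any single hard step: the identity $\phi_p(\sqrt{I(F)}\cap\mathbb{Z}_{(p)}[X])=\sqrt{I_p(F)}$ relies on compatibility $I_p^0(F)=I_p(F)$, and the reducedness claim relies on $\phi_p$ sending the reduced Gr\"obner basis of $\sqrt{I(F)}$ to that of $\sqrt{I_p(F)}$ without collapsing terms --- exactly $p$ being effectively lucky for $\sqrt{I(F)}$. Pinning down that these properties are guaranteed for the primes surviving \textsc{DeleteUnluckyPrimes} and \textsc{FinalTest}, or else stating them as explicit hypotheses as in \cite{Noro-Yoko}, is the only delicate point; the inclusions themselves are routine.
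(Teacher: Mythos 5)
The paper itself offers no proof of Lemma \ref{lemrad}: it is imported verbatim from \cite{Noro-Yoko} (Lemma 6.7), so your argument can only be judged on its own merits and against the intended source argument. Your core route is the right one and almost certainly the intended one: under compatibility of $p$ with $F$ one proves $\phi_p(\sqrt{I(F)}\cap\mathbb{Z}_{(p)}[X])=\sqrt{I_p(F)}$ (the inclusion $\subseteq$ exactly as you do, via $a^k\in I(F)\cap\mathbb{Z}_{(p)}[X]$ and $I_p^0(F)=I_p(F)$; the inclusion $\supseteq$ from hypotheses (1)--(3), since the left-hand side is an ideal containing $\phi_p(H_{can})$), whence (2) makes $H_{can}$ a $p$-compatible Gr\"obner basis candidate for $\sqrt{I(F)}$ and Lemma \ref{lem:p-com} together with (3) gives that $H_{can}$ is a Gr\"obner basis of $\sqrt{I(F)}$. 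You are also right, and honest, that compatibility of $p$ with $F$ is nowhere among (1)--(3) and has to be read into ``output of our CRT modular computation''; stating it as an explicit hypothesis, as in \cite{Noro-Yoko}, is the correct fix.

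The genuine gap is the reducedness step. Permissibility does give that $\phi_p$ preserves the leading monomials of $H_{can}$, so these form the same antichain as the leading monomials of the reduced basis $H_p$ and $H_{can}$ is a minimal Gr\"obner basis. But your claim that the non-leading monomials of $H_{can}$ coincide with those of $\phi_p(H_{can})$ does not follow from (1)--(3): any coefficient of $H_{can}$ divisible by $p$ is killed by $\phi_p$, and the appeal to ``$\phi_p$ carries $R$ to $H_p$ with no term cancellation'' concerns $R$ rather than $H_{can}$ and presupposes effective luckiness for $\sqrt{I(F)}$, which is not assumed (nor is $\phi_p(R)=H_p$ known from (1)--(3)). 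Concretely, take lex with $x\succ y$, $F=\{x^2-1,\ y^3-y\}$ (already radical) and $H_{can}=\{x^2-1+p(y^3-y),\ y^3-y\}$ for any odd $p$: conditions (1)--(3) all hold and $\phi_p(H_{can})$ is the reduced Gr\"obner basis of $\sqrt{I_p(F)}$, yet $H_{can}$ is not reduced (the monomial $y^3$ in the first element is the leading monomial of the second); a scaling such as $(1+p)(y^3-y)$ likewise defeats monicity. So (1)--(3) alone only yield ``a Gr\"obner basis'' --- which is all Lemma \ref{lem:p-com} can deliver --- and the conclusion ``the reduced Gr\"obner basis'' genuinely needs the shape of the CRT/rational-reconstruction output (monic elements, every monomial of $H_{can}$ occurring in the modular reduced basis for one of the lucky primes used, with matching leading monomials across those primes) or correspondingly strengthened hypotheses. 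Your closing paragraph shows you sense this dependence, but the proof as written does not close it.
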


We can extend Corollary \ref{interpri} to intersection of prime divisors by using Theorem \ref{cri:rad} as Proposition \ref{interrad}. We can ensure that the lifted ideal $I(H_{can})$ is radical from Lemma \ref{lemrad} and an intersection of prime divisors $I$ from Theorem \ref{cri:rad} and Theorem \ref{theorem:diq}.  

\begin{Proposition} \label{interrad}
Under the conditions of Corollary \ref{interpri} (except the primality of $J$), if $J$ is a radical ideal then $J$ is some intersections of prime divisors of $I$. 
\end{Proposition}

We note that, if $\Ass_U(I_p(F))$ consist of one prime, that is, $\bar J$ is prime, then we check if $J$ is prime or not more easily. Moreover, if $\Ass_U(I_p(F))$ consists of two prime ideals $\bar{P}_1$ and $\bar{P}_2$ and  then we combine those prime divisors and apply the criterion for radical to the lifting of $\bar{P}_1\cap \bar{P}_2$. We also make the same argument for $\bar{P}_1\cap \bar{P}_2\cap \bar{P}_3$, $\bar{P}_1\cap \bar{P}_2\cap \bar{P}_3\cap \bar{P}_4$ and so on. 

\begin{Example}
	Let $I=(x)\cap (x^3,y)\cap (x^2+1)$ Let $F=\{x^3y+xy,x^5+x^3\}$ be the reduced Gr\"obner basis of $I$. We  consider two prime numbers $p=3, 5$. Then, $\Ass (I_3(F))=\{(x), (x,y), (x^2+1)\}$ and $\Ass (I_5(F))=\{(x), (x,y), (x+2),(x+3)\}$. For $U_1=\{y\}$ and $U_2=\emptyset$, $\Ass_{U_1} (I_3(F))=\{(x),(x^2+1)\}$ and $\Ass_{U_2} (I_3(F))=\{(x,y)\}$. Similarly, $\Ass_{U_1} (I_5(F))=\{(x),(x+2),(x+3)\}$ and $\Ass_{U_2} (I_5(F))=\{(x,y)\}$. For $J_p(U)=\bigcap_{P_p\in \Ass_U(I_p(F)) }P_p$, it follows that $J_3(U_1)=(x^3+x)$, $J_5(U_1)=(x^3+x)$, $J_3 (U_2)=(x,y)$ and $J_5(U_2)=(x,y)$. By using CRT, we may compute radicals of intermediate primary components $J_{can}(U_1)=(x^3+x)$ and $J_{can}(U_2)=(x,y)$. Finally, we obtain an intermediate primary decomposition $\{(x^3+x), (x^3,y)\}$ of $I$ from Lemma \ref{hullpm_gen} and Theorem \ref{thm-p_gen}. 
\end{Example}

Finally, we sketch an outline of intermediate primary decomposition via MIS as follows. Its termination comes from the finiteness of unlucky primes for computation of associated prime divisors and primary components.
	
\medskip
\noindent
{\bf Intermediate Primary Decomposition via MIS}
	
	\begin{itemize}
		\item[Input:] $F$: a Gr\"obner basis of an ideal $I$.
		\item[Output:] $\{Q(U)\}$: an IPD via MIS of $I$. 
	\end{itemize}
	
		\begin{itemize}
			\item[(Step 1)] choose $\mathcal{P}$ as a list of random primes satisfying \textsc{primeTest}
			\item[(Step 2)] compute $\Ass(I_p(F))$ for $p\in \mathcal{P}$ and choose a set of MISs $\mathcal{U}$ from $\Ass(I_p(F))$
			\item[(Step 3)] compute $J_p(U)=\bigcap_{P_p\in \Ass_U(I_p(F)) }P_p$ for each $U\in \mathcal{U}$ and let $\mathcal{JP}(U)=\mathcal{JP}(U)\cup \{J_p(U)\}$
			\item[(Step 4)] delete unlucky $p$ for $\mathcal{JP}(U)$ and obtain $\mathcal{JP}_{lucky}(U)$
			\item[(Step 5)] lift $\mathcal{JP}_{lucky}(U)$ to $J_{can}(U)$ by CRT and rational reconstruction. If $J_{can}(U)$ is unmixed then go to Step 6; otherwise RESTART
			\item[(Step 6)] if $J_{can} (U)$ passes \textsc{finalTest}  (Proposition \ref{interrad}) then go to Step 7: otherwise RESTART
			\item[(Step 7)]  compute an intersection of primary components $Q(U)$ by $\hull (I+J_{can}(U)^m)$ (Lemma \ref{hullpm_gen} and \ref{primarycri_gen}) or $\hull ((I:(I:J_{can}(U)^{\infty})^{\infty}))$ (Theorem \ref{thm-p_gen}) for isolated cases
			\item[(Step 8)] if $\bigcap_{U\in \mathcal{U}} Q(U)=I$ then return $\{Q(U)\}$; otherwise RESTART
		\end{itemize}
		RESTART: enlarge $\mathcal{P}$ with prime numbers not used so far and go back to Step 2

\section{Experiments}
In this section, we see some naive experiments on \textsc{Singular} \cite{DGPS}. Timings (in seconds) are measured in real time and on a PC with Intel Core i7-8700B CPU with 32GB memory. We see several examples with intermediate coefficient growth. The source code for several algorithms ( {\tt modQuotient}, {\tt modSat} and {\tt modDiq}) is open in https://github.com/IshiharaYuki/moddiq. 

To implement modular algorithms for (double) ideal quotient and saturation, we use the library {\tt modular.lib}. A function {\tt modular} returns a candidate from modular computations by CRT and rational reconstruction. As the optional arguments, the function has {\tt primeTest},  {\tt deleteUnluckyPrimes},  {\tt pTest}  and  {\tt finalTest}. In this paper, we implemented  {\tt primeTest}, {\tt pTest}  and  {\tt finalTest} for ideal quotient and saturation. Also, we use Singular implemented functions {\tt quotient} and  {\tt sat} to compute $(I:J)$ and $(I:J^{\infty})$ respectively (about computations of ideal quotient and saturation, see \cite{greuel2002singular}). We explain some details of our implementations. First, {\tt modQuotient} computes ideal quotient by modular techniques based on Lemma \ref{lemma:colon_gen}. Second, {\tt modSat} computes saturation by modular techniques based on Lemma \ref{lemma:sat_gen} and Lemma \ref{lem:2213}. Third, {\tt diq} computes double ideal quotient by using {\tt quotient} twice and {\tt modDiq} computes double ideal quotient based on Theorem \ref{theorem:diq}. The function {\tt modDiq} uses {\tt modQuotient} to check the condition that $(I_p(F):I_p(G))=\phi_p((I(F):I(G))\cap Z_{(p)}[X])$ and $K_{can} \subset (I(F):(I(F):I(G)))$ in Theorem \ref{theorem:diq}.  Of course, we can compute double ideal quotient by using {\tt modQuotient} twice. 

 Here, we use the degree reverse lexicographical ordering ({\tt dp} on \textsc{Singular}) .We tested our implementation by "cyclic ideal", where $cyclic(n)$ is defined in $\mathbb{Q}[x_1,\ldots,x_n]$ (see the definition in \cite{Backelin}). We let $P_1=(-15x_{5}+16x_{6}^3-60x_{6}^2+225x_{6}-4,2x_{5}^2-7x_{5}+2x_{6}^2-7x_{6}+28,(4x_{6}-1)x_{5}-x_{6}+4,4x_{1}+x_{5}+x_{6},4x_{2}+x_{5}+x_{6},4x_{3}+x_{5}+x_{6},4x_{4}+x_{5}+x_{6})$ and $P_2=(x_2^2+4x_2+1,x_1+x_2+4,x_3-1,x_4-1,x_5-1,x_6-1)$ be prime divisors of $cyclic(6)$. Let $Q_1=((-15x_{5}+16x_{6}^3-60x_{6}^2+225x_{6}-4)^2,(2x_{5}^2-7x_{5}+2x_{6}^2-7x_{6}+28)^2,(4x_{6}-1)x_{5}-x_{6}+4,4x_{1}+x_{5}+x_{6},4x_{2}+x_{5}+x_{6},4x_{3}+x_{5}+x_{6},4x_{4}+x_{5}+x_{6})$ be a $P_1$-primary ideal. Also, we let $I_1=(8x^2y^2 + 5xy^3 + 3x^3z + x^2yz,x^5 + 2y^3z^2 + 13y^2z^3 + 5yz^4 ,8x^3 + 12y^3 + xz^2, 7x^2y^4 + 18xy^3z^2 + y^3z^3)$ be a modification of an ideal appeared in \cite{Arnold} and $I_2=(xw_{11}-yw_{10},yw_{12}-zw_{11},-w_{11}w_{20}+w_{21}w_{10},-w_{21}w_{12}+w_{22}w_{11})$ be $A_{2,3,3}$ (see \cite{strumfels}). As inputs, we used their Gr\"obner bases. 

In Table \ref{table1}, we can see that {\tt modQuotient} is very effective for computation of such ideals. In table \ref{table2}, we compare timings of computations of saturation in each method. To consider ideals with non-prime components, we take {an intersection} or products of ideals. We can see that modSat is very effective even when multiplicities of target primary components are large.  In table \ref{table3}, we see results of prime divisors checks by double ideal quotient in each method. We can see that modular methods "double {\tt modQuotient}" and {\tt modDiq} are very efficient, comparing with the rational {\tt diq}. In almost cases in the table, {\tt modDiq} is faster than {\tt modQuotient} since the final test (Theorem \ref{theorem:diq}) may have some effectiveness for efficient computations. 

As a whole, we examined the efficiency of modular techniques for ideal quotients by computational experiments. 

\begin{scriptsize}

\begin{table}[H]
		\begin{center}
\begin{tabular}{c|c||c}\hline
ideal quotient & {\tt quotient} & {\tt modQuotient} \\\hline
$(cyclic(6):P_1)$ & 35.0 &11.2 \\ 
$(cyclic(6):P_2)$ &  15.1&7.65 \\ 
$(I_1^2:I_1)$ & 7.80 & 0.32 \\ 
$(I_1^3:I_1)$ &255  &  7.67 \\ 
$(I_1^4:I_1)$ &2137&  68.8 \\ 
$(I_1I_2:I_2)$ & 0.88& 0.72 \\\hline 
\end{tabular}
			\caption{Ideal quotient} \label{table1}
		\end{center} 
	\end{table}
		
	\vspace{-1cm}
	
	\begin{table}[H]
		\begin{center}
\begin{tabular}{c|c||c}\hline
saturation & {\tt sat} & {\tt modSat} \\\hline
$((cyclic(6)\cap Q_1):P_1^{\infty})$ & 86.9&16.4  \\  
$(I_1I_2^2:I_2^{\infty})$ &  1264&21.9  \\ 
$((I_1\cdot (x^{100},xy)):(x,y)^{\infty})$ &0.33&0.13  \\ 
$((I_1\cdot (x^{500},xy)):(x,y)^{\infty})$ &27.3&1.18 \\ 
$((I_1\cdot (x^{1000},xy)):(x,y)^{\infty})$ &201&4.25 \\ \hline 
\end{tabular}
			\caption{Saturation} \label{table2}
		\end{center} 
	\end{table}
	
	\vspace{-1cm}
	
	\begin{table}[H]
		\begin{center}
\begin{tabular}{c|c||c|c}\hline
[ideal, prime divisor] & {\tt diq} & double {\tt modQuotient} & {\tt modDiq} \\\hline
$[cyclic(6), P_1]$ &37.0  &28.9&  17.8\\ 
$[cyclic(6), P_2]$ & 15.3&9.36&11.3 \\ 
$[I_1^3, (x,y)]$ &13.1  &8.96 &5.32 \\ 
$[I_1^4, (x,y)]$ &254  & 81.7&41.4 \\ 
$[I_1^2I_2, (x,y,z)]$ &143 &80.7 &29.1 \\ \hline
\end{tabular}
			\caption{Double ideal quotient} \label{table3}
		\end{center} 
	\end{table}	
	
	\vspace{-1cm}	
	\end{scriptsize}

\section{Conclusion and Remarks}
In this paper, we apply modular techniques to effective localization and double ideal quotient. Double ideal quotient and its variants are used to prime divisor check and generate primary component. Modular techniques can avoid intermediate coefficient growth and thus we can compute double ideal quotient and its variants efficiently. We also devise new algorithms for modular prime divisor check and intermediate primary decomposition. We have already implemented {\tt modQuotient}, {\tt modSat} and {\tt modDiq} on Singular, and we can see that modular techniques are very effective for several examples in experiments. 

We are on the way to implement Associated Check (Algorithm \ref{alg2}, \ref{alg3}) and complete an efficient algorithm of Intermediate Primary Decomposition via MIS. In particular, we can expect that Algorithm \ref{alg2} will also be efficient for examples we see in the experiments of {\tt modDiq}. Combining Algorithm \ref{alg2} and Algorithm \ref{alg3}, we may have a new test for prime divisors as follows. First, we choose a list of random primes $\mathcal{P}$ and check $(I_p(F):(I_p(F):I_p(G)))=I_p(G)$ for each $p\in \mathcal{P}$, where $I(F)$ is an ideal and $I(G)$ is a prime ideal as inputs. Second, if prime numbers s.t. $(I_p(F):(I_p(F):I_p(G)))=I_p(G)$ are majority then we go to  Algorithm \ref{alg2}; otherwise, go to  Algorithm  \ref{alg3}. Finally, we continue to enlarge $\mathcal{P}$ until we pass the associated test (Corollary \ref{Cor:Asstest}) or the non-associated test (Theorem \ref{nonass}). Also, we can compute a Gr\"obner basis of $(I(F):I(G))$ at the same time during the algorithms. 

As our future work, we continue to improve the implementations and extend experiments to other examples. Also, we are thinking about intermediate primary decomposition in another way e.g. by double saturation.

\section*{Acknowledgements}
This work has been advanced during the author's research stay at Technische Universit\"at Kaiserslautern, supported by Overseas Challenge Program for Young Researchers of Japan Society for the Promotion of Science. The author is very grateful to the \textsc{Singular} team for fruitful discussions and kind hospitality there. In particular, he is very thankful to Wolfram Decker and Hans Sch\"onemann for helpful advice of modular techniques and programming on \textsc{Singular} at Kaiserslautern. He appreciates the kind support of the computational facility by Masayuki Noro. He would like to thank his supervisor, Kazuhiro Yokoyama, for constructive comments and suggestions for the paper.

\bibliographystyle{ACM-Reference-Format}
\bibliography{sample-base}

\end{document}